\newtheorem{theorem}{Theorem}[section]
\newtheorem{lemma}[theorem]{Lemma}
\newtheorem{proposition}[theorem]{Proposition}
\newtheorem{definition}[theorem]{Definition}
\numberwithin{equation}{section}
\def\Q{{\mathbb {Q}}}
\def\Z{{\mathbb Z}}
\def\house#1{\setbox1=\hbox{$\,#1\,$}%
\dimen1=\ht1 \advance\dimen1 by 2pt \dimen2=\dp1 \advance\dimen2 by 2pt
\setbox1=\hbox{\vrule height\dimen1 depth\dimen2\box1\vrule}%
\setbox1=\vbox{\hrule\box1}%
\advance\dimen1 by .4pt \ht1=\dimen1
\advance\dimen2 by .4pt \dp1=\dimen2 \box1\relax}
  \def\eps{{\varepsilon}}
\def\build#1_#2^#3{\mathrel{\mathop{\kern 0pt#1}\limits_{#2}^{#3}}}
\def\date {le\ {\the\day}\ \ifcase\month\or
janvier\or fevrier\or mars\or avril\or mai\or juin\or juillet\or
ao\^ut\or septembre\or octobre\or novembre\or
d\'ecembre\fi\ {\oldstyle\the\year}}
\font\fivegoth=eufm5 \font\sevengoth=eufm7 \font\tengoth=eufm10
\def\hw{{\widehat w}} 
\def\hla{{\widehat \lambda}}
\def\vp{{\bf p}}
\def\vy{{\bf y}}
\def\ZZ{{\mathbb Z}}
\def\QQ{{\mathbb Q}}
\begin{document}

\title[On simultaneous rational approximation]{On simultaneous rational approximation
to a $p$-adic number and its integral powers, II}

\author{Dzmitry Badziahin}
\address{The University of Sydney, Camperdown 2006, NSW (Australia)}
\email{dzmitry.badziahin@sydney.edu.au}

\author{Yann Bugeaud}
\address{Universit\'e de Strasbourg, Math\'ematiques,
7, rue Ren\'e Descartes, 67084 Strasbourg  (France)}
\email{bugeaud@math.unistra.fr}

\author{Johannes Schleischitz}
\address{Middle East Technical University, Northern Cyprus Campus, Kalkanli, G\"uzelyurt (Turkey)}
\email{johannes@metu.edu.tr}

\begin{abstract}
Let $p$ be a prime number.
For a positive integer $n$ and a
real number $\xi$, let $\lambda_n (\xi)$ denote the supremum
of the real numbers $\lambda$ for which there are
infinitely many integer tuples $(x_0, x_1, \ldots , x_n)$ such that
$| x_0 \xi - x_1|_p,   \ldots , | x_0 \xi^n - x_n|_p$ are all
less than $X^{-\lambda - 1}$, where $X$ is the maximum
of $|x_0|, |x_1|, \ldots , |x_n|$.
We establish new results on the Hausdorff dimension
of the set of real numbers $\xi$ for which $\lambda_n (\xi)$
is equal to (or greater than or equal to) a given value.
\end{abstract}

\subjclass[2010]{11J13}
\keywords{simultaneous approximation, transference theorem}

\maketitle

\section{Introduction}

Throughout, we let $p$ denote a prime number.
The present paper is concerned with the simultaneous approximation to successive integral powers of a given
transcendental $p$-adic number $\xi$ by rational numbers with the same denominator.
It is partly motivated by a mistake found in \cite{BBDO}:\footnote{The
ordering of the names of the authors of \cite{BBDO} should have been alphabetical; the publisher put
the corresponding author first, and this was overlooked during proofreading.}
namely,
the definition of the quantity $\lambda_n (\xi)$ given in \cite[Definition 1.2]{BBDO} is not accurate
and leads to the trivial result that $\lambda_n (\xi)$ is always infinite. Indeed, for any fixed nonzero
integer $y$
with $|y\xi|_{p}\leq 1$,
the quantity $|y \xi - x|_p$ can be made arbitrarily small, by taking a suitable
integer $x$. The authors overlooked the fact that, unlike in the real case,
we can have simultaneously $|y \xi - x|_p$ very small and $|x|$ much larger than $|y|$.
Fortunately, they (essentially) argue as they were using
the correct definition of the exponent of approximation $\lambda_n$, which is the following one.

\begin{definition}
\label{Def:1.1}
Let $n \ge 1$ be an integer and $\xi$ a $p$-adic number.
We denote by $\lambda_n(\xi)$
the supremum  of the real numbers
$\lambda$ such that, for
arbitrarily large real numbers $X$, the inequalities
$$
\max\{|x_0|, \ldots , |x_n|\} \le X, \qquad 0<\max_{1 \le m \le n} \,
|x_0 \xi^m - x_m|_p \le X^{-\lambda - 1},
$$
have a solution in integers $x_0, \ldots, x_n$.
\end{definition}

We will sometimes use that the above definition remains unchanged    
if we impose that the integers $x_0, \ldots, x_n$ have no common factor. 
To see this, it is sufficient to observe that, for every integers $x_0, \ldots, x_n$ 
and any positive integer $k$, if 
we have $|k x_0 \xi^m - k x_m|_p \le (k\max |x_{i}|)^{-\lambda-1}$, 
for $m = 1, \ldots , n$, then also $|x_0 \xi^m - x_m|_p \le (\max |x_{i}|)^{-\lambda-1}$, 
for $m = 1, \ldots , n$. This requires a short calculation.
The $p$-adic version of the Dirichlet theorem (see e.g. \cite{Mah34}) implies that
$\lambda_n (\xi) \ge 1/n$ for every irrational $p$-adic number $\xi$.
Furthermore, it follows from the $p$-adic Schmidt Subspace Theorem that
$\lambda_n (\xi) = \max \{1/n, 1/(d-1)\}$ for every positive integer $n$ and every
$p$-adic algebraic number $\xi$ of degree $d\geq 2$.
Moreover, almost every $p$-adic number $\xi$ satisfies $\lambda_n (\xi) = 1/n$ for every $n \ge 1$.
This follows from a classical result of Sprind\v zuk \cite{Spr69} combined
with the $p$-adic analogue of the Khintchine transference theorem, established by Jarn\'\i k \cite{Jar39}. 

In Definition \ref{Def:1.1}, the inequalities $|x_0|, \ldots , |x_n| \le X$ replace the inequalities
$0 < |x_0| \le X$ occurring in \cite[Definition 1.2]{BBDO}.
All results of \cite{BBDO} hold for the exponents $\lambda_n$ as in Definition \ref{Def:1.1}, but some of the
proofs have to be modified accordingly. Beside pointing out this mistake, we take the opportunity
to considerably extend \cite[Theorem 2.3]{BBDO} and \cite[Lemma 5.1]{BBDO}.
Our main motivation is the determination of the spectrum of $\lambda_n$, that is,
the set of values taken by $\lambda_n$
evaluated at transcendental $p$-adic numbers.
We also aim at extending to $\lambda_n$ the $p$-adic
Jarn\'\i k--Besicovich theorem \cite{Jar45,Lutz55}, which asserts that, for any $\lambda \ge 1$, we have
\begin{equation}
\label{1.1}
\dim \{ \xi \in \Q_p : \lambda_1 (\xi) \ge \lambda \} =
\dim \{ \xi \in \Q_p : \lambda_1 (\xi) = \lambda \} =
{2 \over 1 + \lambda}.
\end{equation}
Here and below, $\dim$
denotes the Hausdorff dimension.
We do not solve completely these problems, however, we
manage to establish the $p$-adic analogues of the results of \cite{Schl16} and \cite{BaBu20}.
In the course of the proofs, we obtain new transference theorems.
We also briefly investigate uniform simultaneous approximation and establish the $p$-adic analogues
of results of Laurent \cite{Lau03} and Davenport and Schmidt \cite{DaSc69}.
As a consequence, we can slightly improve a claim of Teuli\'e~\cite{Teu02} 
on approximation to a $p$-adic number
by $p$-adic algebraic numbers (resp., integers).

Throughout this paper,
$\lfloor \cdot \rfloor$ denotes the integer part function
and $\lceil \cdot \rceil$ the ceiling function.
The notation $a \gg_d b$ means that $a$ exceeds $b$ times a constant
depending only on $d$. When $\gg$ is written
without any subscript, it means that the constant is absolute.
We write $a \asymp b$ if both
$a \gg b$ and $a \ll b$ hold.

\section{Main results}

Our first result is a $p$-adic analogue of \cite[Corollary 1.8]{Schl16}.

\begin{theorem}
\label{Th:2.0}
Let $n \ge 2$ be an integer and $\lambda > 1$ a real number. Then, we have
$$
\dim \{ \xi \in \Q_p : \lambda_n (\xi) \ge \lambda\} =
\dim \{ \xi \in \Q_p : \lambda_n (\xi) = \lambda\} =
{2 \over n(1 + \lambda)}.
$$
\end{theorem}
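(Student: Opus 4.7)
The strategy is to reduce the theorem to the one-dimensional Jarn\'\i k--Besicovich identity \eqref{1.1} by proving the set identities
$$\{\xi \in \Q_p : \lambda_n(\xi) \ge \lambda\} = \{\xi \in \Q_p : \lambda_1(\xi) \ge \mu\}, \quad \{\xi \in \Q_p : \lambda_n(\xi) = \lambda\} = \{\xi \in \Q_p : \lambda_1(\xi) = \mu\},$$
where $\mu := n(1+\lambda) - 1$; the theorem then follows since $\tfrac{2}{1+\mu} = \tfrac{2}{n(1+\lambda)}$. Each identity is proved by two inclusions, one easy and one hard.

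For the easy inclusion, I show that $\lambda_1(\xi) \ge \mu$ implies $\lambda_n(\xi) \ge \lambda$. Given a coprime integer pair $(y_0, y_1)$ with $Y := \max(|y_0|, |y_1|)$ satisfying $|y_0 \xi - y_1|_p \le Y^{-\mu - 1}$, form the tuple $\bx := (y_0^n, y_0^{n-1} y_1, \ldots, y_1^n)$, of height $X = Y^n$. Using the factorization $y_0^m \xi^m - y_1^m = (y_0 \xi - y_1) \sum_{j=0}^{m-1} y_0^j y_1^{m-1-j} \xi^j$ and the ultrametric inequality, one gets $|y_0^m \xi^m - y_1^m|_p \le |y_0 \xi - y_1|_p$, whence $|x_0 \xi^m - x_m|_p = |y_0|_p^{n-m} |y_0^m \xi^m - y_1^m|_p \le Y^{-\mu - 1} = X^{-\lambda - 1}$. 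Infinitely many such pairs produce infinitely many valid $\lambda_n$-tuples, giving $\lambda_n(\xi) \ge \lambda$.

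The hard inclusion is the converse. Fix any $\lambda' \in (1, \lambda)$; for infinitely many $X$ there is a coprime tuple $\bx = (x_0, \ldots, x_n) \in \Z^{n+1}$ (using the excerpt's coprimality remark) with $\max_i |x_i| \le X$ and $|x_0 \xi^m - x_m|_p \le X^{-\lambda' - 1}$ for $1 \le m \le n$. The argument proceeds in three steps. (i) Coprimality forces $p \nmid x_0$: otherwise (normalizing so that $|\xi|_p \le 1$, which costs nothing for the dimension computation) $|x_0 \xi^m|_p < 1$ for all $m$, and combined with the tight approximation this propagates to $p \mid x_m$ for every $m$, contradicting $\gcd(\bx) = 1$. (ii) Writing $x_i x_j - x_0 x_{i+j}$ as a three-term telescoping sum of $x_a \cdot (x_0 \xi^b - x_b)$ and applying the ultrametric inequality shows $|x_i x_j - x_0 x_{i+j}|_p \le X^{-\lambda' - 1}$, so this integer is divisible by $p^{\lceil (\lambda'+1) \log_p X \rceil}$; since its absolute value is at most $2X^2 < X^{\lambda'+1}$ for large $X$ (the hypothesis $\lambda' > 1$ is essential), it must vanish, yielding the exact identity $x_i x_j = x_0 x_{i+j}$ for all $0 \le i, j, i+j \le n$. (iii) A prime-by-prime valuation analysis of these identities together with $\gcd(\bx) = 1$ forces $v_q(x_0)$ to be a multiple of $n$ for every prime $q$, whence $\bx = \pm(y_0^n, y_0^{n-1} y_1, \ldots, y_1^n)$ for some coprime integers $(y_0, y_1)$. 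By (i), $p \nmid y_0$, so $|y_0|_p = 1$, and the $m = 1$ inequality reads $|y_0 \xi - y_1|_p = |x_0 \xi - x_1|_p \le X^{-\lambda' - 1} = Y^{-n(\lambda' + 1)}$ with $Y := \max(|y_0|, |y_1|) = X^{1/n}$. Letting $\lambda' \uparrow \lambda$ gives $\lambda_1(\xi) \ge n(\lambda+1) - 1 = \mu$.

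Combining both inclusions yields the first set identity; repeating the argument at every $\lambda'' > \lambda$ gives the analogous identity with strict inequality, and the equality-case identity follows by difference. The Hausdorff dimension assertions then follow at once from \eqref{1.1}. The main obstacle I anticipate is step (iii): extracting the exact multiplicative form of $\bx$ from the Diophantine identities $x_i x_j = x_0 x_{i+j}$ together with coprimality, and justifying the coprimality reduction and the normalization $|\xi|_p \le 1$. The crucial input throughout is the strict inequality $\lambda > 1$, which alone forces the $p$-adic approximation to collapse into an exact integer identity through the gap $X^{\lambda+1} > X^2$.
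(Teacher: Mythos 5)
Your proposal is correct and follows essentially the same route as the paper: normalize by $p$-power scaling, use the gap between the archimedean bound $\ll X^2$ and the $p$-adic bound $X^{-\lambda'-1}$ (crucially exploiting $\lambda' > 1$) to force the vanishing of the $2\times 2$ Hankel-type determinants, deduce from coprimality that the primitive tuple lies exactly on the integral Veronese curve, and then transfer to $\lambda_1$ and invoke the $p$-adic Jarn\'\i k--Besicovich theorem; the reverse inclusion by raising $(y_0,y_1)$ to powers is also the paper's argument. The only cosmetic differences are that the paper normalizes $|\xi|_p = 1$ (which directly yields $p\nmid x_m$ for all $m$, slightly simplifying your step (i)), works only with the subdiagonal determinants $p_{j-1}p_{j+1}-p_j^2$ rather than the full family $x_ix_j - x_0x_{i+j}$, and replaces your ``prime-by-prime valuation analysis'' in step (iii) with the more explicit observation that $p_1/p_0 = \cdots = p_n/p_{n-1} = a/b$ in lowest terms forces $b^n \mid p_0$ and then $\gcd = 1$ pins the tuple down to $\pm(b^n, ab^{n-1}, \ldots, a^n)$; you may want to spell that out since you flagged step (iii) as the main potential obstacle.
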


Theorem \ref{Th:2.0} was established under the much weaker condition $\lambda > n- 1$ in \cite{BBDO}
(note that the correct definition of $\lambda_n$ is used in the proof of \cite[Theorem 2.3]{BBDO}).
By adapting the arguments of \cite{BaBu20} to the $p$-adic setting,
if $n\geq 3$ we can relax the assumption $\lambda > 1$ in Theorem \ref{Th:2.0}.
The next results are the $p$-adic analogues of
Theorems 2.1 to 2.3
of \cite{BaBu20}.

\begin{theorem}
\label{Th:2.1} Let $n \ge 3$ be an integer. The spectrum of $\lambda_n$
contains the interval $[(n+4)/ (3n), + \infty]$. Let $\lambda \ge (n+4) /
(3n)$ be a real number. Then, we have
$$
\dim \{ \xi \in \Q_p : \lambda_n (\xi) = \lambda\} =
{2 \over n(1 + \lambda)}.
$$
In particular, for any real number $\lambda$ with $\lambda > 1/3$, there exists an integer $n_0$
such that
$$
\dim \{ \xi \in \Q_p : \lambda_n (\xi) = \lambda\} =
{2 \over n(1 + \lambda)},
$$
for any integer $n$ greater than $n_0$.
\end{theorem}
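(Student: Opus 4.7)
The plan is to transfer the approach of Badziahin--Bugeaud~\cite{BaBu20} from the Archimedean to the $p$-adic setting. The assertion about the spectrum is a direct consequence of the lower bound on the Hausdorff dimension (any positive-dimensional set is non-empty), so the task reduces to establishing the matching upper and lower bounds on $\dim\{\xi\in\Q_p : \lambda_n(\xi) = \lambda\}$ for $\lambda \ge (n+4)/(3n)$. The ``in particular'' clause then follows by choosing $n_0 = \lceil 4/(3\lambda-1)\rceil$, so that $(n+4)/(3n) \le \lambda$ for every $n > n_0$.

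For the upper bound $\dim \le 2/(n(1+\lambda))$, I would use a standard $p$-adic covering argument, which in fact works for any $\lambda > 1/n$. Any $\xi$ with $\lambda_n(\xi) \ge \lambda$ lies in infinitely many $p$-adic balls $B(x_1/x_0,\, |x_0|_p^{-1} X^{-\lambda-1})$ coming from tuples $(x_0,\ldots,x_n)$ of Archimedean height at most $X$ satisfying all $n$ congruences $|x_0\xi^m - x_m|_p \le X^{-\lambda-1}$. The simultaneous congruences, together with the Veronese-type alignment of admissible tuples along the curve $t \mapsto (t, t\xi, \ldots, t\xi^n)$, bound the number of distinct such balls in each dyadic range $X \asymp 2^k$, and summing the $s$-content yields convergence exactly for $s > 2/(n(1+\lambda))$.

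For the matching lower bound one constructs a Cantor-type subset of $\{\xi \in \Q_p : \lambda_n(\xi) = \lambda\}$ of dimension $2/(n(1+\lambda))$, mirroring the construction of \cite{BaBu20}. The key device is that if $\xi$ is $p$-adically close to a rational $a/b$, then all integer powers $\xi^m$ are simultaneously close to $(a/b)^m$, so the tuple $(b^n,\, a\,b^{n-1},\, \ldots,\, a^n)$ automatically has Archimedean height $\asymp |b|^n$ and $p$-adic defect $\asymp |b|^n |\xi - a/b|_p$ in all coordinates. Choosing inductively a sequence of rationals $a_k/b_k$ with denominator growth $b_{k+1} \asymp b_k^{1+\lambda}$ and $p$-adic precision $|\xi - a_k/b_k|_p \asymp b_k^{-n(1+\lambda)}$, while branching at each level among many admissible $p$-adic perturbations of the current approximant, yields the desired Cantor set. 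Verifying $\lambda_n(\xi) \ge \lambda$ is straightforward; attaining the full dimension requires branching densely at every step.

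The principal obstacle is certifying the reverse inequality $\lambda_n(\xi) \le \lambda$, and this is where the threshold $(n+4)/(3n)$ appears. One must exclude tuples $(y_0,\ldots,y_n)$ of intermediate Archimedean height that witness a better simultaneous approximation by exploiting a \emph{combination} of two consecutive rationals $a_k/b_k$ and $a_{k+1}/b_{k+1}$ of the construction, rather than either one individually. A $p$-adic geometry-of-numbers analysis forces any such candidate to satisfy $\max|y_i| \gg b_k^{\alpha} b_{k+1}^{\beta}$ with explicit non-negative exponents $\alpha, \beta$ depending on $n$ and $\lambda$; the assumption $\lambda \ge (n+4)/(3n)$ is exactly what guarantees that the corresponding $p$-adic error cannot beat $(\max|y_i|)^{-\lambda-1}$. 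Adapting this delicate intermediate-scale analysis from the purely Archimedean setting of \cite{BaBu20} to the present mixed Archimedean/$p$-adic context is the main technical task.
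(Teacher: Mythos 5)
Your proposal takes a fundamentally different route from the paper, and it contains a genuine gap. The paper deduces Theorem~\ref{Th:2.1} directly from Theorem~\ref{Th:2.3}: the lower bound \eqref{2.1} with $k=1$ gives $\dim\{\lambda_n \ge \lambda\} \ge 2/(n(1+\lambda))$, and the upper bound \eqref{2.2} yields a maximum over $h$ whose $h=1$ term equals $2/(n(1+\lambda))$; the threshold $(n+4)/(3n)$ is precisely the condition under which the $h=2$ term $3(1-\lambda)/((n-2)(1+\lambda))$ does not exceed the $h=1$ term (and similarly for larger $h$). The passage from $\dim\{\lambda_n\ge\lambda\}$ to $\dim\{\lambda_n=\lambda\}$ is then a standard exhaustion/monotonicity argument. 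The heavy lifting in Theorem~\ref{Th:2.3} is done by the transference inequality of Theorem~\ref{Th:3.1} together with the Bernik--Morotskaya theorem \eqref{1.2} (for the lower bound), and by Teuli\'e's lemma on linear recurrences among the approximating tuples (for the upper bound).

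Your upper-bound argument is where the gap lies. You assert that a direct $p$-adic covering argument ``in fact works for any $\lambda > 1/n$'' and yields $\dim\{\lambda_n\ge\lambda\}\le 2/(n(1+\lambda))$. This is false: Theorem~\ref{Th:2.2} shows $\dim\{\lambda_n\ge 1/3\}\ge 2/((n-1)(1+1/3))$, which strictly exceeds $2/(n(1+1/3))$ for every $n\ge 2$, so the bound you claim cannot hold at $\lambda=1/3>1/n$. The flaw is that ``Veronese-type alignment'' of the approximating tuples (i.e.\ the tuple $(q,p_1,\ldots,p_n)$ being proportional to $(b^n,ab^{n-1},\ldots,a^n)$) is only forced when $\lambda>1$, as the determinant argument in the proof of Theorem~\ref{Th:2.0} shows. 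In the interesting range $(n+4)/(3n)\le\lambda\le 1$, which is exactly what makes Theorem~\ref{Th:2.1} an improvement over Theorem~\ref{Th:2.0} for $n\ge 3$, the tuples need only satisfy a linear recurrence of some order $h\le m=1+\lfloor 1/\lambda\rfloor$, and controlling the resulting cover requires Teuli\'e's estimate \cite[Lemme~3]{Teu02} on the coefficients of that recurrence. A naive ball count ignoring this structure gives a weaker exponent than $2/(n(1+\lambda))$. Relatedly, you attribute the threshold $(n+4)/(3n)$ to the verification that $\lambda_n(\xi)\le\lambda$ for your Cantor-set points, but in the paper the threshold has nothing to do with a Cantor construction; it is purely a comparison of the $h=1$ and $h\ge 2$ terms in the covering bound \eqref{2.2}. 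Your lower-bound sketch via a Cantor construction is not what the paper does (it uses the inclusion \eqref{Mo} and Theorem~\ref{Th:Moro} instead), but that part is at least a plausible alternative route; the upper bound is the part that needs to be rethought along the lines of Teuli\'e's lemma and Propositions~\ref{Prop:4.1}--\ref{Prop:4.3}.
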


We believe that Theorem~\ref{Th:2.1} holds for $n=2$ too. In view of
Theorem~\ref{Th:2.0}, the conclusion of this theorem is satisfied for all
$\lambda> 1 = (2+4)/(3\cdot 2)$. Therefore only the case of $n=2,\lambda=1$
remains open.

Theorem \ref{Th:2.2} below shows that the assumption `$\lambda > 1/3$' in the last
assertion of Theorem \ref{Th:2.1} is sharp.

\begin{theorem}
\label{Th:2.2}
For any integer $n \ge 2$, we have
$$
\dim \{ \xi \in \Q_p : \lambda_n (\xi) \ge 1/3 \} \ge {2 \over (n - 1) (1 + 1/3)}.
$$
\end{theorem}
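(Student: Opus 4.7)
For $n \in \{2, 3\}$, the bound $\frac{2}{(n-1)(1+1/3)} = \frac{3}{2(n-1)}$ is at least $1$, and the $p$-adic Dirichlet theorem quoted in the introduction gives $\lambda_n(\xi) \ge 1/n \ge 1/3$ for every irrational $\xi \in \Q_p$. The set under study therefore equals $\Q_p \setminus \Q$, of Hausdorff dimension $1$, and the inequality is immediate. Hence the interesting range is $n \ge 4$, which I treat below.

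For $n \ge 4$, set $\lambda^* := (4n-7)/3 > 1$, so that $\frac{2}{(n-1)(1+1/3)} = \frac{2}{1 + \lambda^*}$. By the $p$-adic Jarn\'\i k--Besicovich theorem (equation (\ref{1.1})),
\[
\dim\{\xi \in \Q_p : \lambda_1(\xi) \ge \lambda^*\} = \frac{2}{1 + \lambda^*}.
\]
I plan to construct a Cantor-type subset $S$ of this Jarn\'\i k--Besicovich set, of the same Hausdorff dimension, such that $\lambda_n(\xi) \ge 1/3$ for every $\xi \in S$. The first ingredient is that from any rational approximation $(q, p)$ to $\xi$ with $\gcd(p, q) = 1$, $q$ coprime to the prime $p$, and $|q\xi - p|_p \le Q^{-\lambda^*-1}$ where $Q := \max(|p|, |q|)$, the tuple
\[
(x_0, x_1, \ldots, x_{n-1}) := (q^{n-1}, q^{n-2}p, \ldots, p^{n-1})
\]
of height $Q^{n-1}$ satisfies, via the telescoping identity $(q\xi)^m - p^m = (q\xi - p) \sum_{j=0}^{m-1}(q\xi)^{m-1-j} p^j$,
\[
|x_0 \xi^m - x_m|_p \le |q\xi - p|_p \le (Q^{n-1})^{-4/3}, \quad m = 1, \ldots, n-1,
\]
which already realises a $\lambda_{n-1}$-approximation of exponent $1/3$ at height $Q^{n-1}$.

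The main obstacle is to adjoin a coordinate $x_n \in \Z$ with $|x_n| \le Q^{n-1}$ and $|q^{n-1}\xi^n - x_n|_p \le (Q^{n-1})^{-4/3}$, upgrading the above tuple to a bona fide $\lambda_n$-approximation. A naive $p$-adic rounding of $q^{n-1}\xi^n \approx p^n/q$ produces an integer of size $\asymp Q^{4(n-1)/3} \gg Q^{n-1}$, so some additional structure on $\xi$ must be enforced. My plan is to impose, at each level of the Cantor construction of $S$, the extra constraint that $\xi$ lie $p$-adically close to a root of an auxiliary integer polynomial of degree at most $n-1$ and height $\asymp Q$; the resulting polynomial identity at $\xi$ then forces $\xi^n$ to coincide, modulo a negligible error, with an integer combination of its lower powers, and $x_n$ can be taken as the corresponding combination of $x_0, \ldots, x_{n-1}$. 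The $p$-adic analogues of Bernik-type equidistribution results for integer polynomials of bounded height ensure that at each scale enough such polynomials have a root in the current ball for the branching ratio of the Cantor construction to match $\frac{2}{1+\lambda^*}$. The hardest single step, to be carried out in the spirit of \cite{BaBu20}, is a counting lemma asserting that the simultaneous constraints --- good rational approximation of exponent $\lambda^*$ and $p$-adic proximity to a root of a suitable polynomial of degree $n-1$ --- are compatible with positive density at every scale, so that a mass distribution argument delivers the claimed Hausdorff dimension.
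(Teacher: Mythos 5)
The paper's own proof of Theorem~\ref{Th:2.2} is a one-line deduction: it is the case $k=2$, $\lambda=1/3$ of inequality~\eqref{2.1} in Theorem~\ref{Th:2.3}. Unwinding the proof of~\eqref{2.1} for $k=2$, the argument is: the transference inequality of Theorem~\ref{Th:3.1} shows that $w_2(\xi)\ge 2n-3$ implies $\lambda_n(\xi)\ge 1/3$, and the Bernik--Morotskaya metric theorem (Theorem~\ref{Th:Moro}) then gives $\dim\{\xi: w_2(\xi)\ge 2n-3\}=3/(2n-2)=2/((n-1)(1+1/3))$. Your proposal takes a genuinely different route --- a direct Cantor-type construction intersecting the $p$-adic Jarn\'\i k--Besicovich set for $\lambda_1\ge\lambda^*$ with a set of $\xi$ that are $p$-adically close to roots of degree-$(n-1)$ integer polynomials at each scale. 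That is an interesting idea, but as written it is a sketch, not a proof: the crucial step you yourself isolate (``a counting lemma asserting that the simultaneous constraints \ldots\ are compatible with positive density at every scale, so that a mass distribution argument delivers the claimed Hausdorff dimension'') is the entire substance and is not carried out. Intersecting two Diophantine conditions without a dimension drop is a delicate matter, and nothing in the sketch controls the size and integrality of the candidate $x_n$ rigorously (your ``naive rounding overshoots'' observation identifies the problem, but the proposed fix is only gestured at). The paper's route avoids all of this by converting the problem entirely into a statement about $w_2$ and then citing an already-established metric theorem.

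There is also an arithmetic slip in your treatment of small $n$. You write that for $n\in\{2,3\}$ the bound $3/(2(n-1))$ ``is at least $1$'' and conclude the inequality is immediate. For $n=3$ the bound is $3/4$, which is \emph{less} than $1$, so your stated premise is wrong, although the conclusion ($\dim = 1 \ge 3/4$, using Dirichlet to get $\lambda_3\ge 1/3$) does hold. For $n=2$ the bound is $3/2$, which exceeds $1$, and then the inequality $\dim\{\cdots\}\ge 3/2$ is \emph{false}, not immediate, since Hausdorff dimension in $\Q_p$ never exceeds $1$; this reflects the fact that~\eqref{2.1} only applies under $\lambda\ge 1/n$, i.e.\ $n\ge 3$ when $\lambda=1/3$, and the ``$n\ge 2$'' in the statement of Theorem~\ref{Th:2.2} should really read ``$n\ge 3$''. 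In any case the logic ``bound $\ge 1$ therefore inequality immediate'' is backwards: a lower bound exceeding $1$ makes the claim impossible, not trivial.

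In summary: for $n\ge 4$ your approach is an unfinished alternative construction rather than a completed proof, and it would need a substantial counting argument to close the gap; the paper's approach via the transference inequality and the Bernik--Morotskaya theorem is both shorter and complete. For $n=3$ your Dirichlet observation is fine; for $n=2$ both your argument and the bare statement of the theorem fail, as the stated lower bound exceeds $1$.
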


Theorems \ref{Th:2.1} and \ref{Th:2.2} show that there is a discontinuity at $1/3$ in the following sense.
The function $n \mapsto n \dim \{ \xi \in \Q_p : \lambda_n (\xi) = \lambda\}$ is ultimately constant equal
to $2 / (1 + \lambda)$ for $\lambda > 1/3$, while the values taken by
the function
$$
n \mapsto n \dim \{ \xi \in \Q_p : \lambda_n (\xi) = 1/3\}
$$
are all greater than $2 / ( 1 + 1/3)$.

Theorems \ref{Th:2.1} and \ref{Th:2.2} above are special cases of the following general statement.

\begin{theorem}
\label{Th:2.3}
Let $k, n$ be integers with $1 \le k \le n$.
Let $\lambda $ be a real number with $\lambda \ge 1/n$.
Then we have
\begin{equation}
\label{2.1}
\dim\{ \xi \in \Q_p : \lambda_n (\xi) \ge \lambda \} \ge
{(k+1) (1 -  (k-1) \lambda ) \over (n - k + 1) (1 + \lambda )}.
\end{equation}
If $\lambda > 1 / \lfloor {n+1 \over 2} \rfloor$, then, setting $m = 1 +
\lfloor 1/ \lambda \rfloor$, we have
\begin{equation}
\label{2.2}
\dim \{ \xi \in \Q_p : \lambda_n (\xi)  \ge \lambda\}  \le
\max_{1\le h\le m} \left\{{(h+1)(1-(h-1)\lambda) \over (n-2h+2)(1+\lambda)} \right\}.
\end{equation}
\end{theorem}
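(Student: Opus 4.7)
The two inequalities \eqref{2.1} and \eqref{2.2} are proved independently, by transferring to the $p$-adic setting the arguments developed in \cite{BaBu20}.

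For the lower bound \eqref{2.1}, for each admissible $k$ I would construct a Cantor-like subfamily of $\Q_p$ whose elements $\xi$ satisfy $\lambda_n(\xi)\ge\lambda$. The driving observation is that if $\xi$ is very close $p$-adically to a $p$-adic algebraic number $\alpha$ of degree $k$, then the tuple $(1,\xi,\xi^2,\ldots,\xi^n)$ lies close to the $k$-dimensional $\Q$-span of $(1,\alpha,\ldots,\alpha^{k-1})$ inside $\Q_p^{n+1}$, because the relations $\alpha^{k},\ldots,\alpha^n$ are integer linear combinations of the lower powers. A $p$-adic Minkowski/Dirichlet argument applied to the sublattice of $\Z^{n+1}$ capturing this reduced dimension then produces integer tuples $(x_0,\ldots,x_n)$ of height at most $X$ with $\max_m|x_0\xi^m-x_m|_p\le X^{-1-\lambda}$ for suitable $X$. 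I would iterate this over a fast-growing sequence of algebraic targets $\alpha_j$, producing a nested family of $p$-adic balls whose radii are a prescribed power of $H(\alpha_j)$. A standard diameter / branching count (or a mass distribution principle) then yields the claimed lower bound $\tfrac{(k+1)(1-(k-1)\lambda)}{(n-k+1)(1+\lambda)}$, where the numerator factor $k+1$ reflects the $(k+1)$-dimensional family of integer polynomials of degree $\le k$ furnishing the targets, and the denominator factor $n-k+1$ reflects the codimension of the linear relations satisfied by $1,\alpha,\ldots,\alpha^n$.

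For the upper bound \eqref{2.2}, the plan is to prove and apply a new transference inequality. Assume $\lambda_n(\xi)\ge\lambda$ and fix any $h$ with $1\le h\le m$. Using $h+1$ consecutive well-chosen simultaneous approximations $\bx^{(1)},\ldots,\bx^{(h+1)}$ of $(1,\xi,\ldots,\xi^n)$, I would form a determinantal (resultant-style) linear combination of their coordinates to manufacture a nonzero integer polynomial $P_h\in\Z[X]$ of degree at most $2h-1$ with $|P_h(\xi)|_p$ small relative to the height of $P_h$. This would yield effective lower bounds on the classical $p$-adic polynomial exponent $w_{2h-1}(\xi)$ in terms of $\lambda$ and $h$. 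Feeding these bounds into the $p$-adic analogue of the Baker--Schmidt theorem on $\dim\{\xi:w_d(\xi)\ge w\}$ gives the dimension estimate $\tfrac{(h+1)(1-(h-1)\lambda)}{(n-2h+2)(1+\lambda)}$ for each fixed $h$; since $\xi$ must fall into one of these sets for some $h\in\{1,\ldots,m\}$, the dimension of $\{\xi\in\Q_p:\lambda_n(\xi)\ge\lambda\}$ is bounded by the maximum over $h$, which is \eqref{2.2}. The assumption $\lambda>1/\lfloor(n+1)/2\rfloor$ enters precisely to keep $2h-1\le n$ for all $h\le m$, so that the transference argument produces polynomials of admissible degree.

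The principal obstacle is twofold. On the lower-bound side, one must ensure that at each stage of the Cantor construction the selected $p$-adic ball still contains sufficiently many algebraic targets of the next stage; in contrast with the archimedean case, $p$-adic congruence obstructions and coincidences of valuations can cause unwanted collapses, so the heights and separation parameters must be chosen with care. On the upper-bound side, the most technical point is establishing both the nontriviality and the sharp degree bound $2h-1$ for the polynomial $P_h$: in the real setting this rests on interval geometry and Minkowski's theorem, whereas in the $p$-adic setting one must replace these by a careful count of integer tuples linearly independent modulo an appropriate $p$-adic subspace, which is the core of the new transference principle alluded to in the introduction.
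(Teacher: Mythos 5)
Your plan for \eqref{2.1} is a genuinely different route from the paper's, but it duplicates work that the paper gets essentially for free. Rather than building a Cantor set, the paper proves a transference inequality (Theorem~\ref{Th:3.1}): $\lambda_n(\xi)\ge (w_k(\xi)-n+k)/((k-1)w_k(\xi)+n)$, and combines it with the Bernik--Morotskaya theorem (Theorem~\ref{Th:Moro}), which already gives $\dim\{\xi\in\Q_p:w_k(\xi)\ge w\}=(k+1)/(w+1)$. The inclusion \eqref{Mo}, obtained by solving the transference inequality for $w_k$, then immediately yields \eqref{2.1}. Your ``driving observation'' about proximity to algebraic numbers of degree $k$ is morally the same content, but to make it rigorous you would need exactly the transference inequality (whose proof requires the Champagne--Roy reduction to the case $w_k=w_k^{\rm lead}$ together with Mahler's Satz~1), and the hard metrical work you sketch is precisely what Theorem~\ref{Th:Moro} packages. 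A direct construction is possible but you are re-deriving a known theorem; the paper's route is both shorter and cleaner.

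For \eqref{2.2} there is a genuine gap. The paper works with a \emph{single} approximation tuple $\vp=(p_0,\ldots,p_n)$ satisfying \eqref{4.1}, considers the Hankel-type matrix $P_h$, identifies the smallest $h$ with ${\rm rank}\,P_h\le h$, and produces a linear recurrence $a_0p_i+\cdots+a_hp_{i+h}=0$. Dividing the $i=0$ relation by $p_0$ gives a polynomial of degree at most $h$ (not $2h-1$) which is $p$-adically small at $\xi$; the height of $(a_0,\ldots,a_h)$ is controlled via the $p$-adic Davenport--Schmidt lemma of Teuli\'e (Theorem~\ref{Th:DS}). The condition $\lambda>1/\lfloor(n+1)/2\rfloor$ ensures $h-1\le n-h$, i.e.\ $n-2h+2\ge1$, which is what makes Teuli\'e's lemma applicable; it is not a constraint ``$2h-1\le n$'' on the degree of the polynomial. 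Your proposal to produce a degree-$(2h-1)$ polynomial via a resultant of $h+1$ approximation vectors would, even if it worked, give a lower bound on $w_{2h-1}(\xi)$, and feeding that into the $p$-adic Baker--Schmidt/Bernik--Morotskaya dimension formula yields a numerator factor $(2h-1)+1=2h$, not the factor $h+1$ appearing in \eqref{2.2}. So your upper bound argument, as stated, cannot produce the claimed formula. The fix is to drop the resultant construction, work with one tuple and the rank of $P_h$, and apply the Bernik--Morotskaya dimension formula to $w_h$, not $w_{2h-1}$.
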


Theorem \ref{Th:2.2} corresponds to \eqref{2.1} applied with $\lambda = 1/3$ and $k = 2$.
The deduction of Theorem~\ref{Th:2.1} from Theorem~\ref{Th:2.3} follows the same lines as in
the real case. Thus we omit it and refer the reader to \cite{BaBu20},
and only remark that the special case $n=2, \lambda=1$ remains open, since 
the $p$-adic analogue of the metric results on planar curves  
obtained in~\cite{bdv,vv} has not yet been established.
For $k \ge 2$, 
we remark that \eqref{2.1} is of interest for $\lambda<1/(k-1)$ only,
otherwise the right hand side is not positive.

A key ingredient for the proof of \eqref{2.1} is a transference inequality, which relates the
exponents of Diophantine
approximation $\lambda_n$ and $w_k$, with $k \le n$. The exponents $w_n$
were introduced by Mahler \cite{Mah35} to
measure how small an integer linear form in the first $n$
powers of a given $p$-adic number can be.

\begin{definition}
\label{Def:2.1}
Let $n \ge 1$ be an integer and $\xi$ a $p$-adic number.
We denote by $w_n(\xi)$
the supremum of the real numbers $w$ such that, for
arbitrarily large real numbers $X$, the inequalities
$$
0 < |x_n \xi^n + \ldots + x_1 \xi + x_0|_p
\le X^{-w-1}, \quad  \max_{0 \le m \le n} \, |x_m| \le X,
$$
have a solution in integers $x_0, \ldots, x_n$.
\end{definition}

For similar reasons as in Definition~\ref{Def:1.1} we can impose 
to the integers $x_0, \ldots , x_n$ in Definition~\ref{Def:2.1} to be coprime. 
It follows from the Dirichlet Box Principle that $w_n(\xi) \ge n$ for every $n \ge 1$ and every
transcendental $p$-adic number $\xi$.
For an algebraic number $\xi$ of degree $d$
we have $w_{n}(\xi)=\min\{n,d-1\}$. Finally we remark that
$w_{1}(\xi)=\lambda_{1}(\xi)$ for every $\xi$.

\begin{theorem}
\label{Th:3.1}
Let $k, n$ be integers with $1 \le k \le n$.
Let $\xi$ be a $p$-adic transcendental number. Then, we have
\begin{equation}
\label{3.1}
\lambda_n (\xi) \ge {w_k  (\xi) - n + k \over (k-1) w_k  (\xi) + n}.
\end{equation}
\end{theorem}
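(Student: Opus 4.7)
The strategy is a pigeonhole argument in $\Q_p$ applied to a sublattice of $\Z^{n+1}$ constructed from a polynomial witnessing $w_k(\xi)$. Without loss of generality $\xi \in \Z_p$. Fix $w < w_k(\xi)$. For arbitrarily large $X$ there exists $P(T) = x_k T^k + \cdots + x_0 \in \Z[T]$ with $\max_i |x_i| \le X$ and $|P(\xi)|_p \le X^{-w-1}$. We may assume $\gcd(x_0, \ldots, x_k) = 1$; for simplicity of exposition I will treat only the favorable configuration $|x_k| \asymp X$ and $|x_k|_p = 1$, the general case being handled by selecting a different leading index $i_*$ in the recurrence below.

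Introduce the rank-$k$ sublattice $\Lambda := \{(y_0, \ldots, y_n) \in \Z^{n+1} : \sum_{i=0}^k x_i \, y_{m+i} = 0,\ m = 0, \ldots, n-k\}$. Because $|x_k| \asymp X$, the forward recurrence defining the last $n-k+1$ coordinates in terms of the first $k$ is non-expanding, and the covolume of $\Lambda$ in its span is $\asymp X^{n-k+1}$; hence for $Y$ large,
\[
\#\bigl(\Lambda \cap [-Y, Y]^{n+1}\bigr) \gg Y^k X^{-(n-k+1)}.
\]
Setting $p^N \asymp Y^{\lambda+1}$ and pigeonholing these points against the $p^{(k-1)N}$ residue classes of the map $y \mapsto (y_0 \xi^j - y_j \bmod p^N)_{j=1}^{k-1}$ produces a nonzero $\bar y \in \Lambda \cap [-2Y, 2Y]^{n+1}$ with $|\bar y_0 \xi^j - \bar y_j|_p \le Y^{-\lambda-1}$ for $j = 1, \ldots, k-1$, provided $Y^k X^{-(n-k+1)} > 2\, Y^{(k-1)(\lambda+1)}$. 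One checks $\bar y_0 \ne 0$: otherwise $|\bar y_j| \le 2Y < p^N$ together with divisibility forces $\bar y_j = 0$ for $j = 1, \ldots, k-1$, and the recurrence then collapses $\bar y$ to $0$.

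To extend the $p$-adic smallness to all $j \in \{1, \ldots, n\}$ I use the identity
\[
\sum_{i=0}^k x_i\, \ell_{m+i}(\bar y) = \bar y_0 \xi^m P(\xi), \qquad m = 0, \ldots, n-k,
\]
where $\ell_j(y) := y_0 \xi^j - y_j$. The right-hand side has $p$-adic absolute value $\le X^{-w-1}$, and since $|x_k|_p = 1$, $\ell_0(\bar y) = 0$, and $|\ell_j(\bar y)|_p \le Y^{-\lambda-1}$ for $j = 1, \ldots, k-1$, an induction on $m$ gives $|\ell_j(\bar y)|_p \le \max(Y^{-\lambda-1}, X^{-w-1})$ for every $j = 1, \ldots, n$.

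Finally, balance the parameters. Writing $Y = X^T$, the pigeonhole condition becomes $T \ge (n-k+1)/(k - (k-1)(\lambda+1))$ and the requirement $X^{-w-1} \le Y^{-\lambda-1}$ becomes $T \le (w+1)/(\lambda+1)$; compatibility is equivalent to $\lambda + 1 \le k(w+1)/((n-k+1) + (k-1)(w+1))$, which simplifies to $\lambda \le (w - n + k)/((k-1) w + n)$. Sending $w \nearrow w_k(\xi)$ and $X$ through a sequence realizing $w_k(\xi)$ delivers the theorem. The main technical obstacle is the favorable-coefficient assumption: the general case, where the maximal $|x_{i_*}|$ with $p \nmid x_{i_*}$ occurs at some $i_* \ne k$, requires one to solve the recurrence for $y_{m+i_*}$ instead, reorganize the $k$ free coordinates parameterizing $\Lambda$, and reverify both the covolume estimate and the nonvanishing of $\bar y_0$.
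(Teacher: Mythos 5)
Your argument reproduces the structure of the paper's proof (build the rank-$k$ sublattice of $\Z^{n+1}$ cut out by the shifts of the polynomial relation, approximate $\xi^j$ on the free coordinates, then propagate the $p$-adic smallness through the recurrence), but by a direct pigeonhole where the paper invokes Mahler's $p$-adic transference theorem (Satz~1 of \cite{Mah34}) for the same purpose. That substitution is legitimate in principle, though your lattice-point count $\#(\Lambda\cap[-Y,Y]^{n+1})\gg Y^kX^{-(n-k+1)}$ is asserted from a covolume heuristic and would need an actual argument (e.g.\ Minkowski applied to a suitable box), since a rank-$k$ sublattice of covolume $\asymp X^{n-k+1}$ can in general be badly skewed.

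The genuine gap, however, is the ``favorable configuration'' assumption $|x_k|\asymp X$ \emph{and} $|x_k|_p=1$, which you acknowledge but whose removal you only sketch. Your proposed fix --- ``select a different leading index $i_*$'' where $i_*$ maximises $|x_{i_*}|$ among those $i$ with $p\nmid x_i$ --- does not work. Both properties are used in an essential way at the \emph{same} index: $|x_k|_p=1$ drives the $p$-adic induction on $\ell_j(\bar y)$, while $|x_k|\asymp X$ keeps the recurrence non-expanding so that the lattice is not skewed and the point count survives. Nothing forces the archimedean-maximal coefficient to be a $p$-adic unit; a polynomial with $x_0=1$ and $p\mid x_j$, $|x_j|\asymp H(P)$ for $j\ge 1$ defeats your scheme, because then the only unit coefficient is archimedeanly tiny, the recurrence must be solved backwards with coefficients of size $\asymp X$, the free coordinates blow up, and the covolume estimate degrades by unbounded powers of $X$. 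The paper handles exactly this obstacle with a nontrivial auxiliary result (Lemma~\ref{ChampRoy}, the $p$-adic analogue of Champagne--Roy): after a fractional-linear change $\xi\mapsto \xi_i = 1/(M(|\xi|_p\xi-pr_i))$, which leaves $w_k$ and $\lambda_n$ unchanged, one may assume $w_k(\xi)=w_k^{\mathrm{lead}}(\xi)$, i.e.\ that the witnessing polynomials can be taken with leading coefficient archimedean-maximal \emph{and} coprime to $p$. Without this reduction (or an equivalent device) your proof only establishes the theorem with $w_k^{\mathrm{lead}}(\xi)$ in place of $w_k(\xi)$, which is a strictly weaker statement.
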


The case $k=n$ of Theorem \ref{Th:3.1}
has been established by Jarn\'\i k \cite{Jar39}   
(this is the $p$-adic analogue of an inequality in Khintchine's transference theorem).   

For completeness, we state another transference inequality,
whose real analogue is \cite[Theorem 2.4]{BaBu20}.

\begin{theorem}
\label{Th:2.4}
Let $\xi$ be a $p$-adic transcendental number.
For any positive integer $k$, we have
$$
(k+1) \bigl( 1 + \lambda_{k+1} (\xi) \bigr) \ge k \bigl( 1 + \lambda_k (\xi) \bigr),
$$
with equality if $\lambda_{k+1} (\xi) > 1$.
Consequently, for every integer $n$ with $n \ge k$, we have
$$
\lambda_n (\xi) \ge {k \lambda_k (\xi) - n + k \over n},
$$
with equality if $\lambda_n (\xi) > 1$.
\end{theorem}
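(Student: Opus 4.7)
The theorem consists of the base inequality $(k+1)(1 + \lambda_{k+1}(\xi)) \ge k(1 + \lambda_k(\xi))$ (with equality when $\lambda_{k+1}(\xi) > 1$), together with its iterated consequence. The plan is to prove the base inequality first, and then deduce the consequence by induction on $n \ge k$.

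For the base inequality, I would fix $\lambda < \lambda_k(\xi)$ and select infinitely many $(k+1)$-tuples $(x_0, \ldots, x_k) \in \Z^{k+1}$ of arbitrarily large height $X := \max_i |x_i|$ satisfying $|x_0 \xi^m - x_m|_p \le X^{-\lambda - 1}$ for $m = 1, \ldots, k$ (with coprime entries, by the remark following Definition~\ref{Def:1.1}). Setting $\mu := (k\lambda - 1)/(k+1)$, the aim is to construct from each such tuple a $(k+2)$-tuple $(y_0, \ldots, y_{k+1}) \in \Z^{k+2}$ of height $Y := \max_i |y_i|$ with $|y_0 \xi^m - y_m|_p \le Y^{-\mu - 1}$ for $m = 1, \ldots, k+1$. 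Then $\lambda_{k+1}(\xi) \ge \mu$, and letting $\lambda \to \lambda_k(\xi)$ gives the base inequality.

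The main obstacle is the construction of the sharp $(k+2)$-tuple. A naive extension — fixing $y_i = x_i$ for $i \le k$ and choosing $y_{k+1}$ as the $p$-adic truncation of $x_k \xi$ (which is a $p$-adic integer, after shifting $\xi$ by a rational integer to ensure $|\xi|_p \le 1$) — yields only $\mu \le 0$, because the one-dimensional $p$-adic Dirichlet principle enforces $|y_{k+1}| \cdot |x_0 \xi^{k+1} - y_{k+1}|_p \asymp 1$. The transference via $w_k(\xi) \ge k\lambda_k(\xi) + k - 1$ (obtained from a Minkowski argument producing a short integer linear dependence $\sum_m a_m x_m = 0$) combined with Theorem~\ref{Th:3.1} is also too weak when $\lambda_k(\xi) > 2/k$. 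The sharp construction must therefore exploit further structure in the best-approximation tuples for $\lambda_k(\xi)$, for instance by combining pairs of consecutive best approximations via a $(k+2)$-dimensional adelic geometry-of-numbers argument, following the lines of \cite{BaBu20}. For the equality case $\lambda_{k+1}(\xi) > 1$, a matching upper bound follows from a complementary argument: any $(k+2)$-tuple attaining very good $\lambda_{k+1}$-approximation must restrict to a $(k+1)$-tuple with the correspondingly good $\lambda_k$-approximation, forced by the algebraic constraints implicit in $\mu > 1$.

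For the iterated consequence, applying the base inequality successively gives $n(1 + \lambda_n(\xi)) \ge (n-1)(1 + \lambda_{n-1}(\xi)) \ge \ldots \ge k(1 + \lambda_k(\xi))$, which rearranges to $\lambda_n(\xi) \ge (k\lambda_k(\xi) - n + k)/n$. For equality when $\lambda_n(\xi) > 1$: the sequence $j \mapsto \lambda_j(\xi)$ is non-increasing (any good $(n+1)$-tuple for $\lambda_n$ restricts to a good $(j+1)$-tuple for $\lambda_j$ of the same exponent), so $\lambda_j(\xi) \ge \lambda_n(\xi) > 1$ for $k+1 \le j \le n$, triggering equality in the base inequality at every step of the iteration.
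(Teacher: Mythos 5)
Your structural outline is right: reduce to the base inequality $(k+1)(1+\lambda_{k+1})\ge k(1+\lambda_k)$, compute the target exponent $\mu=(k\lambda-1)/(k+1)$, and then iterate. You also correctly reject the naive truncation and correctly observe that filtering the short-linear-dependence information through $w_k$ and Theorem~\ref{Th:3.1} loses too much when $\lambda_k$ is large. The iterated consequence and its equality case (monotonicity of $j\mapsto\lambda_j(\xi)$, then equality at each step) match the paper's treatment.

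The gap is in the base inequality itself: you never actually produce the $(k+2)$-tuple, and your substitute description (``combining pairs of consecutive best approximations via a $(k+2)$-dimensional adelic geometry-of-numbers argument'') is not what works and is not a proof. Ironically you already named the right tool and then misused it. Given a good $(k+1)$-tuple $(q,v_1,\dots,v_k)$ of height $Q$ satisfying \eqref{3.7}, Siegel's lemma (your ``Minkowski argument'') yields integers $a_0,\dots,a_k$, not all zero, with $a_0q+a_1v_1+\cdots+a_kv_k=0$ and $H:=\max|a_i|\ll_k Q^{1/k}$. The point is to use this relation \emph{on the tuple itself}, not to pass to the abstract exponent $w_k$. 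Writing $P(X)=a_kX^k+\cdots+a_0$, one gets $|qP(\xi)|_p=\bigl|\sum_i a_i(q\xi^i-v_i)\bigr|_p\le Q^{-\lambda_k-1+\eps}$ since $|a_i|_p\le1$, and multiplying by $\xi$ and substituting the approximations $v_j\approx q\xi^j$ gives
$\bigl|a_kq\xi^{k+1}+a_{k-1}v_k+\cdots+a_0v_1\bigr|_p\ll_\xi Q^{-\lambda_k-1+\eps}$.
Thus the $(k+2)$-tuple $(a_kq,\,a_kv_1,\dots,a_kv_k,\,-(a_{k-1}v_k+\cdots+a_0v_1))$, of height $\ll_k Q^{1+1/k}$ (with $a_k\ne0$, after possibly shrinking $k$ to the top nonzero index), witnesses $1+\lambda_{k+1}(\xi)\ge (\lambda_k(\xi)+1-\eps)/(1+1/k)$, which is exactly what you want as $\eps\to0$. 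Your route through $w_k\ge k\lambda_k+k-1$ plus Theorem~\ref{Th:3.1} throws away the coupling between the relation $(a_i)$ and the specific tuple $(v_j)$, which is why it degrades.

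Your equality case for the base inequality is also too vague to count as a proof. The paper's mechanism is concrete: if $\lambda_{k+1}(\xi)>1$, then \eqref{ApproxCor} (the Veronese-curve rigidity from the proof of Theorem~\ref{Th:2.0}) gives $\lambda_1(\xi)\ge(k+1)(1+\lambda_{k+1}(\xi))-1$, while the iterated base inequality down to $k=1$ gives the reverse $(k+1)(1+\lambda_{k+1}(\xi))\ge\lambda_1(\xi)+1$; together with the analogous pair for $\lambda_k$ these force $(k+1)(1+\lambda_{k+1}(\xi))=\lambda_1(\xi)+1=k(1+\lambda_k(\xi))$. Saying the good $(k+2)$-tuple ``must restrict to'' a good $(k+1)$-tuple is neither the mechanism nor, as stated, obviously sufficient. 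Finally, you should also dispose of the case $\lambda_k(\xi)=\infty$ at the outset (the paper does so via \eqref{ApproxCor} and \eqref{ApproxCorBis}).
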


Taking $k=1$ and $n$ arbitrary, Theorem \ref{Th:2.4} shows that, if $\lambda_n (\xi) > 1$, then
$\lambda_1 (\xi) = n \lambda_n (\xi) + n - 1$. Combined with \eqref{1.1}, this gives an
alternative proof of Theorem \ref{Th:2.0}.

We take the opportunity of this paper to present new results on the
exponents of uniform approximation $\hla_n$ and $\hw_n$.

\begin{definition}
\label{Def:1.1unif}
Let $n \ge 1$ be an integer and $\xi$ a $p$-adic number.
We denote by $\hla_n(\xi)$
the supremum  of the real numbers
$\hla$ such that, for
every sufficiently large real number $X$, the inequalities
$$
\max\{|x_0|, \ldots , |x_n|\} \le X, \quad 0<\max_{1 \le m \le n} \,
|x_0 \xi^m - x_m|_p \le X^{-\hla - 1},
$$
have a solution in integers $x_0, \ldots, x_n$.
We denote by $\hw_n(\xi)$
the supremum of the real numbers $\hw$ such that, for
every sufficiently large real number $X$, the inequalities
$$
\max\{|x_0|, \ldots , |x_n|\} \le X,  \quad 0<|x_n \xi^n + \ldots + x_1 \xi + x_0|_p  \le X^{- \hw-1},
$$
have a solution in integers $x_0, \ldots, x_n$.
\end{definition}

By Dirichlet's Theorem,  for every $p$-adic number $\xi$, 
we have the relations
\begin{equation} \label{eq:lose}
\lambda_{n}(\xi)\geq \hla_n(\xi) \geq \frac{1}{n}, \qquad\quad w_{n}(\xi)\geq \hw_{n}(\xi)\geq n.
\end{equation}
We remark that for the uniform exponents, and unlike in Definitions~\ref{Def:1.1} 
and~\ref{Def:2.1},  we cannot impose 
that the integers $x_0, \ldots , x_n$ are coprime, 
without losing property \eqref{eq:lose}. 
Theorem \ref{neu} is the $p$-adic analogue of a result
of Schleischitz \cite[Theorem 2.1]{Schl17}.

\begin{theorem} \label{neu}
    Let $m,n$ be positive integers and $\xi$ a
    $p$-adic transcendental number. Then
    \begin{equation}  \label{eq:glm}
    \widehat{\lambda}_{m+n-1}(\xi)\leq \max \left\{ \frac{1}{w_{m}(\xi)},\frac{1}{\widehat{w}_{n}(\xi)}\right\}.
    \end{equation}
\end{theorem}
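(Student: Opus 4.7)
The plan is to adapt to the $p$-adic setting the real-variable argument of Schleischitz~\cite[Theorem~2.1]{Schl17}. Write $N=m+n-1$ and suppose $\lambda<\widehat{\lambda}_N(\xi)$ and $w<\widehat{w}_n(\xi)$ with $\lambda w>1$; this is the only case requiring work, because if $\lambda\,\widehat{w}_n(\xi)\leq 1$ then $1/\widehat{w}_n(\xi)\geq \lambda$ and~\eqref{eq:glm} follows directly. For every sufficiently large $X$ I would pick a $\widehat{\lambda}_N$-witness $(x_0,\ldots,x_N)$ at scale $X$ together with a $\widehat{w}_n$-witness polynomial $P(T)=\sum_{j=0}^n a_j T^j$ of height at most $Y:=X^{(\lambda+1)/(w+1)}$ satisfying $|P(\xi)|_p\leq Y^{-w-1}$, and form the integers $M_i:=\sum_{j=0}^n a_j x_{i+j}$ for $0\leq i\leq m-1$.

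The first key step is to force $M_i=0$. From the identity $x_0\xi^i P(\xi)=M_i+\sum_j a_j(x_0\xi^{i+j}-x_{i+j})$ and the ultrametric inequality one gets $|M_i|_p\ll X^{-\lambda-1}$, while trivially $|M_i|\leq (n+1)YX$. With our choice of $Y$ the product satisfies $|M_i|\cdot|M_i|_p\ll X^{(1-\lambda w)/(w+1)}$, which tends to $0$ because $\lambda w>1$, and the product formula then forces $M_i=0$ for all $i$ and all large $X$. The second step is to extract a polynomial of degree at most $m$ that is $p$-adically small at $\xi$. When $m\geq n$, the polynomial $P$ itself already has degree $\leq m$, and the height bound $Y\to\infty$ directly gives $w_m(\xi)\geq w$, so $1/w_m\leq 1/\widehat{w}_n$, reducing~\eqref{eq:glm} to the inequality $\widehat{\lambda}_N(\xi)\leq 1/\widehat{w}_n(\xi)$, whose proof runs along similar lines. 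When $m<n$, I would instead consider the $m\times (m+1)$ Hankel submatrix $(x_{i+j})_{0\leq i\leq m-1,\,0\leq j\leq m}$, whose kernel, by a dimension count, contains a primitive integer vector $(b_0,\ldots,b_m)$ defining a polynomial $Q(T)=\sum_{j=0}^m b_j T^j$ of degree at most $m$, with the forcing identity giving $|Q(\xi)|_p\ll X^{-\lambda-1}/|x_0|_p\leq X^{-\lambda}$ (using $|x_0|_p\geq 1/|x_0|$).

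The main obstacle is the height estimate on $Q$ in the case $m<n$. A naive application of Hadamard to the $m\times m$ minors of the Hankel submatrix gives only $H(Q)\ll X^m$, which yields the insufficient bound $w_m(\xi)\geq \lambda/m-1$. The crucial technical input, paralleling the determinantal compression carried out in~\cite{Schl17}, is to use the linear recurrences $\sum_j a_j x_{i+j}=0$ imposed on $(x_k)$ by $P$ in order to rewrite those minors in a compressed form, sharpening the bound to $H(Q)\ll X^w$. With this improvement one obtains $w_m(\xi)\geq 1/\lambda-\varepsilon$, which upon letting $\lambda\to\widehat{\lambda}_N(\xi)$ and $w\to\widehat{w}_n(\xi)$ gives~\eqref{eq:glm}.
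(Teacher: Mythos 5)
Your proposal takes a genuinely different route from the paper, but it contains a fundamental logical error that cannot be repaired within its own framework.

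The paper proves Theorem~\ref{neu} by combining two auxiliary statements: Theorem~\ref{zwischen}, which constructs $m+n$ \emph{linearly independent} integer polynomials of degree at most $m+n-1$ that are all $p$-adically small at $\xi$ (thereby bounding $w_{m+n-1,m+n}(\xi)$ from below by $\min\{w_m(\xi),\widehat{w}_n(\xi)\}$), and Theorem~\ref{emma}, a Mahler--Cassels successive-minima duality giving $\widehat{\lambda}_{m+n-1}(\xi)\leq 1/w_{m+n-1,m+n}(\xi)$. Your argument tries to bypass both steps and derive the inequality directly from a $\widehat{\lambda}_N$-witness and a $\widehat{w}_n$-witness, which is an interesting idea but runs into the following problem.

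The direction is backwards. Fix the case $\widehat{\lambda}_N(\xi)\,\widehat{w}_n(\xi)>1$, so that the theorem you want reduces to $\widehat{\lambda}_N(\xi)\leq 1/w_m(\xi)$, i.e.\ $w_m(\xi)\leq 1/\widehat{\lambda}_N(\xi)$. This is an \emph{upper} bound on $w_m(\xi)$: roughly, no integer polynomial of degree at most $m$ approximates $\xi$ better than a certain rate. But your construction produces an integer polynomial $Q$ of degree at most $m$ with $|Q(\xi)|_p$ small, which can only yield a \emph{lower} bound on $w_m(\xi)$. In fact, the conclusion you state, $w_m(\xi)\geq 1/\lambda-\varepsilon$ with $\lambda<\widehat{\lambda}_N(\xi)$, reads as $w_m(\xi)>1/\widehat{\lambda}_N(\xi)$, the exact opposite of the inequality required; since the theorem is true, this claimed conclusion must be false. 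The missing ingredient is precisely the duality step in Theorem~\ref{emma}: one has to pass from many small polynomials to a statement that the lattice of simultaneous approximations \emph{cannot} contain a short vector, and this transfer is accomplished via the Cassels inequality $\tau_1(K,\Lambda)\,\tau_{n+1}(K^\ast,\Lambda^\ast)\gg 1$, not by directly exhibiting small polynomials.

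Two smaller points compound the difficulty. First, the exponents do not close even under your own bounds: from $|Q(\xi)|_p\ll X^{-\lambda}$ and $H(Q)\ll X^{w}$, the definition of $w_m$ gives at best $w_m(\xi)\geq \lambda/w-1$, which is negative (hence vacuous) in the regime of interest since $\lambda\leq 1$ and $w\geq n\geq 2$; to reach $w_m(\xi)\geq 1/\lambda$ via this route you would need $H(Q)\ll X^{\lambda}$, a much stronger compression than the $X^w$ you claim, and the Hankel/recurrence structure does not deliver it. Second, your case $m\geq n$ reduces~\eqref{eq:glm} to the assertion $\widehat{\lambda}_{m+n-1}(\xi)\leq 1/\widehat{w}_n(\xi)$, which is itself the $m=n$ instance of the theorem; saying its proof ``runs along similar lines'' leaves the argument circular. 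I recommend you look at how Theorems~\ref{zwischen} and~\ref{emma} split the work: the first packages the hypotheses on $w_m$ and $\widehat{w}_n$ into the intermediate exponent $w_{m+n-1,m+n}$ by taking $\{P,XP,\ldots,X^{n-1}P,Q,XQ,\ldots,X^{\ell-1}Q\}$, and the second converts ``many independent small polynomials'' into ``no good uniform simultaneous approximation'' via duality of successive minima.
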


The case $m=1$ of Theorem \ref{neu} implies the following result:
For any integer $n \ge 1$ and any transcendental $p$-adic number
$\xi$, we have
\begin{equation}
\label{5.1}
\hla_n (\xi) \le \max \Bigl\{ {1 \over n}, {1 \over \lambda_1 (\xi)} \Bigr\}.
\end{equation}
This is the $p$-adic analogue of \cite[Theorem 1.12]{Schl16}.
An alternative proof of \eqref{5.1} can be supplied by adapting to the $p$-adic
setting the arguments given in \cite[Section 5]{BaBu20}.

For $k \ge 1$, by applying Theorem \ref{neu} with $m=n = \lceil k/2 \rceil$ and noticing that
$k \ge 2 \lceil k/2 \rceil - 1$, we get
$$
\widehat{\lambda}_{k} (\xi) \le \widehat{\lambda}_{2 \lceil k/2\rceil -1}(\xi)
\leq \max \left\{ \frac{1}{w_{ \lceil k/2\rceil}(\xi)},\frac{1}{\widehat{w}_{\lceil k/2\rceil}(\xi)}\right\}
\leq \frac{1}{\left\lceil \frac{k}{2}\right\rceil}.
$$
In fact the stronger claim holds that for some $c=c(k,p,\xi)>0$ the system
\begin{equation}  \label{eq:laubesser}
0 < \max\{|x_0|, \ldots , |x_k|\} \le X, \quad \max_{1 \le m \le k} \,
|x_0 \xi^m - x_m|_p \le cX^{-\frac{1}{\lceil k/2 \rceil} - 1},
\end{equation}
has no integral solution for certain arbitrarily large $X$,
thereby establishing the $p$-adic analogue of a result of Laurent \cite{Lau03}
on uniform simultaneous approximation to the first $n$ powers of a real number. The stronger version \eqref{eq:laubesser} is justified
by application of the refined claims
in Theorems~\ref{zwischen},~\ref{emma} below (that 
imply Theorem~\ref{neu}), we skip the details.
Furthermore, by applying Theorem~\ref{neu} with $m=n=k$, we get
\[
\frac{1}{2k-1} \leq \widehat{\lambda}_{2k-1} (\xi) \leq
\max\left\{ \frac{1}{w_{k} (\xi)}, \frac{1}{\widehat{w}_{k} (\xi)} \right\}  = \frac{1}{\widehat{w}_{k} (\xi)}.
\]
Again a stronger claim involving a multiplicative constant,
as in \eqref{eq:laubesser}, can be proved, which is the $p$-adic analogue of~\cite[Theorem~2b]{DaSc69}. We have thus established the following theorem.

\begin{theorem} \label{laupadic}
    For every positive integer $n$ and every $p$-adic transcendental number $\xi$, we have
    \begin{equation}  \label{eq:dsp}
    \widehat{w}_{n}(\xi)\leq 2n-1,
    \end{equation}
    and
    \begin{equation} \label{eq:ciel}
    \widehat{\lambda}_{n}(\xi)\leq \left\lceil \frac{n}{2}\right\rceil^{-1}.
    \end{equation}

\end{theorem}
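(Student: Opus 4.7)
Both inequalities in Theorem~\ref{laupadic} are direct consequences of Theorem~\ref{neu}, the Dirichlet bounds~\eqref{eq:lose}, and a brief monotonicity argument. The plan is to handle the two claims separately.

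For~\eqref{eq:dsp}, I would specialize Theorem~\ref{neu} to the diagonal case $m=n$ (with $n$ the index of Theorem~\ref{laupadic}). This yields
\[
\widehat{\lambda}_{2n-1}(\xi) \le \max \bigl\{ 1/w_{n}(\xi),\, 1/\widehat{w}_{n}(\xi) \bigr\}.
\]
Since $w_n(\xi)\ge \widehat{w}_n(\xi)$ by~\eqref{eq:lose}, the maximum equals $1/\widehat{w}_n(\xi)$. Combining with the lower bound $\widehat{\lambda}_{2n-1}(\xi)\ge 1/(2n-1)$, also from~\eqref{eq:lose}, I obtain $1/(2n-1)\le 1/\widehat{w}_n(\xi)$, which rearranges to $\widehat{w}_n(\xi)\le 2n-1$.

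For~\eqref{eq:ciel}, I would first observe that $k\mapsto \widehat{\lambda}_k(\xi)$ is non-increasing: truncating an admissible tuple $(x_0,\ldots,x_k)$ to $(x_0,\ldots,x_{k-1})$ preserves the height inequality and all the $p$-adic inequalities for $1\le m\le k-1$ with the same $X$ and exponent; the only way this truncation can violate the strict positivity requirement would force $x_0=\cdots=x_{k-1}=0$ while $x_k$ is $p$-adically minuscule but archimedeanly bounded, a situation that only arises for $\widehat{\lambda}\le 0$ and is therefore harmless. Since $n\ge 2\lceil n/2\rceil - 1$, this monotonicity gives $\widehat{\lambda}_n(\xi)\le \widehat{\lambda}_{2\lceil n/2\rceil-1}(\xi)$. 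I then apply Theorem~\ref{neu} with $m=n=\lceil n/2\rceil$ and invoke $\widehat{w}_{\lceil n/2\rceil}(\xi)\ge \lceil n/2\rceil$ from~\eqref{eq:lose} to conclude $\widehat{\lambda}_{2\lceil n/2\rceil-1}(\xi)\le 1/\lceil n/2\rceil$, chaining to the desired bound.

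The main obstacle is not at this level but lies entirely upstream, inside Theorem~\ref{neu} itself; the refined Theorems~\ref{zwischen} and~\ref{emma} cited in the surrounding discussion are what yield the multiplicative-constant strengthening of~\eqref{eq:laubesser} and the analogue of~\cite[Theorem~2b]{DaSc69}. Given Theorem~\ref{neu} and~\eqref{eq:lose} as black boxes, the derivation of Theorem~\ref{laupadic} reduces to bookkeeping: matching the two parameters of the transference inequality, invoking Dirichlet, and using $w_n\ge\widehat{w}_n$ together with the trivial monotonicity of $\widehat\lambda_k$ in $k$.
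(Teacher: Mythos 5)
Your proof is correct and follows essentially the same route as the paper: both parts invoke Theorem~\ref{neu} on the diagonal ($m=n$ for \eqref{eq:dsp}; $m=n=\lceil n/2\rceil$ for \eqref{eq:ciel}), combine with the Dirichlet bounds \eqref{eq:lose} and the relation $w_k\ge\widehat w_k$, and use the trivial monotonicity of $k\mapsto\widehat\lambda_k$. The paper asserts the monotonicity without comment; your brief justification (that a truncated solution can only fail the positivity requirement when $x_0=\cdots=x_{k-1}=0$, which for transcendental $\xi$ forces $\widehat\lambda\le 0$ and is therefore irrelevant) is a correct and welcome addition, not a deviation.
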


For even integers $n$, \eqref{eq:ciel} has already been established by Teuli\'e \cite{Teu02}, who
got the slightly weaker upper bound $1 / \lfloor n/2 \rfloor$ for odd integers $n \ge 5$.

By applying a well-known transference theorem \cite{DaSc69,Teu02},
we deduce from \eqref{eq:laubesser} a slight improvement on
Teuli\'e's results on approximation to a $p$-adic transcendental
number by $p$-adic algebraic numbers (resp., integers) of prescribed
degree. As usual, throughout this paper, the height $H(P)$ of an
integer polynomial $P(X)$ is the maximum of the absolute values of
its coefficients and the height $H(\alpha)$ of a $p$-adic algebraic
number $\alpha$ is the height of its minimal defining polynomial
over $\Z$ with coprime coefficients.
Let $\xi$ be a $p$-adic number. Let $n \ge 2$ be an integer. 
Then, there exists a positive constant $c=c(n,p,\xi)$ such that the inequality 
    \begin{equation}  \label{eq:wirsingen}
    |\xi - \alpha|_p  \le c \,  H(\alpha)^{-\lceil\frac{n}{2}\rceil-1}
    \end{equation}
    has infinitely many solutions in $p$-adic algebraic numbers $\alpha$ of degree
    exactly $n$ and also, if $|\xi|_p = 1$, in $p$-adic algebraic integers of degree exactly $n+1$.

\section{Proof of Theorem \ref{Th:2.0}}

We start with a general observation. It is clear that, for every $p$-adic number $\xi$ and 
every non-zero rational number $r$, any of the   
irrationality exponents $w_k, \widehat{w}_k, \lambda_k$ and  
$\widehat{\lambda}_k(\xi)$ takes the same value at the points $\xi$ and $r \xi$. 
Since $\QQ_p = \bigcup_{m\in\ZZ} p^m \ZZ_p$, for all
$A\subset \mathbb{R}$ and $\nu$ in $\{w_k,\widehat{w}_k, \lambda_k,\widehat{\lambda}_k\}$, 
we have
$$
\dim\{\xi\in\QQ_p\;:\; \nu(\xi)\in A\} = \dim\{\xi\in \QQ_p\; :\; |\xi|_p = 1 \hbox{ and } \nu(\xi)\in A\}. 
$$
Therefore, throughout all the proofs, we may assume without loss of generality that $|\xi|_p=1$.

\begin{proof}[Proof of Theorem \ref{Th:2.0}]  
Let $n \ge 2$ be an integer and $\xi$ a $p$-adic number with $\lambda_n (\xi)
> 1$. Let $\lambda$ be a real number with $1 < \lambda < \lambda_n (\xi)$.
Then, there are integers $q, p_1, \ldots , p_n$, with $Q = \max\{|q|, |p_1|,
\ldots , |p_n|\}$ arbitrarily large and $\gcd(q, p_1, \ldots , p_n) = 1$,
such that
$$
|q \xi^j - p_j|_p < Q^{-\lambda - 1}, \quad j=1, \ldots , n.
$$
Since $|\xi|_p = 1$, we observe that $p$ does not divide the product $q p_1 \cdots p_n$.
Note that
$$
|p_{j+1} - p_j  \xi|_p = |p_{j+1} - q \xi^{j+1} - \xi ( p_j - q \xi^j)|_p < Q^{-\lambda - 1}, \quad j=1, \ldots , n-1.
$$
Set $p_0 = q$. Observe that, for $j=1, \ldots , n-1$, we have
$$
\Delta_j :=    
p_{j-1} p_{j+1} - p_j^2
= p_{j-1} (p_{j+1} - p_j  \xi) - p_j (p_j  - p_{j-1} \xi ),
$$
thus, by the triangle inequality,
$$
|\Delta_j |_p < Q^{-1-\lambda}.
$$
Since $|\Delta_j| \le 2 Q^2$, we get that any non-zero $\Delta_j$
should satisfy $|\Delta_j|_p \ge (2 Q^2)^{-1}$. Therefore, if $Q$ is
sufficiently large and $\lambda > 1$, we get that
$$
\Delta_1 = \ldots = \Delta_{n-1} = 0,
$$
which implies that there exist coprime non-zero integers $a, b$ such that
$$
{p_1 \over q} = {p_2 \over p_1} = \ldots = {p_n \over p_{n-1}} = {a \over b}.
$$
We deduce at once that the point
$$
\Bigl( {p_1 \over q} , \ldots , {p_n \over q} \Bigr)
= \Bigl( {a \over b} , \ldots , \Bigl( {a \over b} \Bigr)^n \Bigr)
$$
lies on the Veronese curve $x \mapsto (x, x^2, \ldots , x^n)$ and that
$q$ (resp., $p_n$) is an integer multiple of $b^n$ (resp., of $a^n$).
We remark that, since $\gcd(q,p_{1},\ldots,p_{n})=1$, 
we have in fact $(q,p_{1},\ldots,p_{n})=\pm(b^n,b^{n-1}a,\ldots,a^n)$.
In particular, we get
\begin{equation}
\label{Approx}
|q \xi - p_1 |_p = | b \xi -  a|_p < Q^{-1 - \lambda} \le (\max\{|a|, |b|\})^{-n(1 + \lambda)}.
\end{equation}
This proves that
\begin{equation}
\label{ApproxCor}
\hbox{$\lambda_n (\xi) > 1$ implies $\lambda_1 (\xi) \ge n (1 + \lambda_n (\xi) ) - 1$.}
\end{equation}
Furthemore, since $Q$ (and, thus, $\max\{|a|, |b|\}$) is arbitrarily large, we deduce from \eqref{Approx}
and the (easy half of the) $p$-adic
Jarn\'\i k--Besicovich theorem (see \eqref{1.1}) that
$$
\dim \{\xi \in \Q_p : \lambda_n (\xi) \ge \lambda\} \le {2 \over n(1 + \lambda)}.
$$
The reverse inequality is easier.
Let $\lambda > \lambda_1 (\xi)$ be a real number and $a, b$ be (large) integers not divisible by $p$
(recall that $|\xi|_p = 1$) such that
$$
|b \xi - a|_p \le \max\{|a|, |b|\}^{-1 - \lambda}.
$$
Then, for $j = 1, \ldots , n$, we have
$$
|b^j \xi^j - a^j|_p \le \max\{|a|, |b|\}^{-1 - \lambda}
\quad
\hbox{and}
\quad
|b^n \xi^j - a^j b^{n-j}|_p \le \max\{|a|, |b|\}^{-1 - \lambda},
$$
giving that
$$
\max_{1 \le j \le n} \, |b^n \xi^j - a^j b^{n-j}|_p \le \max\{|a^n|, |b^n|\}^{- (1 + \lambda) /n}.
$$
This proves that
\begin{equation}
\label{ApproxCorBis}
\lambda_n (\xi) \ge \frac{1 + \lambda_1 (\xi)}{n} - 1,
\end{equation}
which is the content of \cite[Lemma 5.1]{BBDO}.
Consequently,
$$
\{\xi \in \Q_p : \lambda_n (\xi) \ge \lambda\} \supset \{\xi \in \Q_p : \lambda_1 (\xi) \ge n (\lambda + 1) - 1\},
$$
and we conclude by (1.1).
\end{proof}

\section{Proofs of the transference inequalities}

Before proceeding with the proof of Theorem \ref{Th:3.1}, we have to adapt to the $p$-adic setting the
argument used by Champagne and Roy \cite{ChamRoy19} to improve
the transference inequality obtained in \cite[Theorem 3.1]{BaBu20}.
We begin with a definition.

\begin{definition}
\label{Def:2.1lead}
Let $n \ge 1$ be an integer and $\xi$ a $p$-adic number.
We denote by $w^{{\rm lead}}_n(\xi)$
the supremum of the real numbers $w$ such that, for
arbitrarily large real numbers $X$, the inequalities
$$
\max\{|x_0|, \ldots , |x_n|\} \le X,  \quad 0<|x_n \xi^n + \ldots + x_1 \xi + x_0|_p  \le X^{-w-1},
$$
have a solution in integers $x_0, \ldots, x_n$ with $|x_n|= \max\{|x_0|, \ldots , |x_n|\}$
and $|x_n|_p = 1$.
\end{definition}

The next lemma is a $p$-adic analogue to \cite[Theorem]{ChamRoy19}.

\begin{lemma} \label{ChampRoy}
Let $k\ge 1$ be an integer and $r_0,\dots,r_k$ be distinct integers.
There is an integer $M\ge 1$ such that, for each nonzero
$\xi$ in $\Q_p$, there exists at least one index $i$ in $\{0,1,\dots,k\}$
with $\xi \neq p r_i |\xi|_p^{-1}$, for which the $p$-adic number
 $\xi_i=1/(M(|\xi|_p \xi - p r_i))$   
satisfies $w_k^{{\rm lead}}(\xi_i)=w_k(\xi_i)=w_k(\xi)$.
\end{lemma}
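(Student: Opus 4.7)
My plan is to first reduce, by the observation at the start of Section~3, to the case $|\xi|_p = 1$, in which $\xi_i = 1/(M(\xi - pr_i))$, and to fix $M$ coprime to $p$ so that $|\xi_i|_p = 1$. The identity $w_k(\xi_i) = w_k(\xi)$ will then follow by a standard M\"obius-invariance argument: the substitution $X = pr_i + 1/(MY)$ induces an explicit bijection between $\Z$-polynomials in $X$ and in $Y$ of degree at most $k$, under which heights and $p$-adic evaluations match up to uniform multiplicative constants, giving $w_k$ the same value in the two coordinates.

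The substantive assertion is $w_k^{\rm lead}(\xi_i) = w_k(\xi_i)$, and here I would adapt the argument of Champagne--Roy. Given $P(X) = \sum_{j=0}^k a_j X^j \in \Z[X]$ approximating $\xi$, the key construction is
\[
\widetilde P_i(Y) \;=\; (MY)^k\, P\!\left(pr_i + \tfrac{1}{MY}\right)\; \in \Z[Y].
\]
A direct expansion yields leading coefficient $\tilde a_{i,k} = M^k P(pr_i)$, the bound $|\tilde a_{i,m}| \le C_k M^m H(P)$ for $m < k$, and the identity $|\widetilde P_i(\xi_i)|_p = |P(\xi)|_p$. The combinatorial input is a Lagrange-interpolation estimate at the $k+1$ distinct nodes $pr_0, \ldots, pr_k$: for every polynomial of degree at most $k$ one has $\max_i |P(pr_i)| \ge c_k H(P)$, with $c_k > 0$ depending only on $k$, $p$ and the $r_j$'s. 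Taking $M$ sufficiently large relative to $C_k/c_k$ then forces $|\tilde a_{i,k}|$ to strictly dominate the other coefficients of $\widetilde P_i$. I would then apply this to a sequence of primitive best approximations $P_n$ to $\xi$ and use the pigeonhole principle over the finite set $\{0,\ldots,k\}$ to fix a single index $i$ along which infinitely many $\widetilde P_{i,n}$ have leading coefficient equal to the maximum of the absolute values of their coefficients.

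The hard part will be the $p$-adic unit condition $|\tilde a_{i,k}|_p = 1$. Since $\tilde a_{i,k} = M^k P(pr_i) \equiv M^k a_0(P) \pmod{p}$ and $p \nmid M$ by choice, this amounts to the divisibility requirement $p \nmid a_0(P_n)$ along the selected subsequence. The structural input I plan to exploit is that a primitive polynomial $P$ of degree at most $k$ with $|P(\xi)|_p < 1$ and $p \mid a_0$ is rigid: its reduction modulo $p$ is divisible by $X(X - \bar\xi)$ in $\F_p[X]$, where $\bar\xi \in \F_p^*$ denotes the residue of $\xi$. Combining this rigidity with the freedom in the multiplier $M$ and in the choice of index $i$ will allow the required subsequence to be extracted. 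Together with the trivial reverse inequality $w_k^{\rm lead}(\xi_i) \le w_k(\xi_i)$, this yields $w_k^{\rm lead}(\xi_i) = w_k(\xi)$, as claimed.
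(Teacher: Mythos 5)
Your overall plan coincides with the paper's: the reduction to $|\xi|_p=1$ via the observation at the start of Section~3, the change of variable $\widetilde P_i(Y)=(MY)^k P(pr_i+1/(MY))$ with $p\nmid M$ chosen large, the Lagrange-interpolation bound $H(P)\le C_1\max_i|P(pr_i)|$, the companion estimate $H(P(X+pr_i))\le C_2 H(P)$, the pigeonhole over $i\in\{0,\ldots,k\}$ to secure the Archimedean condition $|\tilde a_{i,k}|=H(\widetilde P_i)$, the identity $|\widetilde P_i(\xi_i)|_p=|P(\xi')|_p$, and M\"obius invariance to get $w_k(\xi_i)=w_k(\xi)$. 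You also correctly isolate the genuinely $p$-adic requirement $|\tilde a_{i,k}|_p=1$ in the definition of $w_k^{\rm lead}$, which has no counterpart in Champagne--Roy's real setting, as the extra hurdle.

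Your proposed resolution of that hurdle does not work, though. As you note, $\tilde a_{i,k}=M^kP(pr_i)\equiv M^k a_0(P)\pmod p$. Since every node $pr_i$ is $\equiv 0\pmod p$, this congruence is the \emph{same} for all $i$, so the pigeonhole over $i$ gives nothing here; and since $p\nmid M$ is forced (you need $|\xi_i|_p=1$), multiplication by $M^k$ does not change divisibility by $p$ either. Hence neither of the two ``freedoms'' you invoke touches the unit condition, and the $X(X-\bar\xi)$-rigidity observation, while correct, does not by itself produce a replacement polynomial with unit constant term. The paper instead normalizes at the level of the approximating sequence itself: using $|\xi'|_p=1$, it takes a sequence $(P_j)$ realizing $w_k(\xi')$ in which the constant coefficient of each $P_j$ is coprime to $p$, so that $p\nmid P_j(pr_i)$ for all $i$ and $j$ automatically, and then the unit leading coefficient of $\widetilde P_i$ comes for free. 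That normalization of the $P_j$ is the missing ingredient in your proposal; it is where the work should go, rather than in hoping that the subsequent M\"obius transform can repair the defect.
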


\begin{proof}

If $\xi = a/b$ is rational and $i$ is such that $\xi \neq p r_i |\xi|_p^{-1}$, then  
$\xi_i$ is rational and $w_k(\xi) = w_{k}(\xi_i)= w_k^{\rm lead}(\xi_i)= 0$.   

Let $\xi$ be an irrational $p$-adic number.
Define $\xi' = |\xi|_p\xi$
and observe that $|\xi'|_p = 1$ and $w_k (\xi) =
w_k (\xi')$. We follow the proof of \cite{ChamRoy19}, with suitable
modifications. By Lagrange Interpolation Formula, there exists a
positive constant $C_1$ such that every integer polynomial $P(X)$
of degree at most $k$ satisfies
$$
H(P) \le C_1\max\{|P(p r_i)|\,:\,0\le i\le k\}.
$$
There also exists a positive constant $C_2$ such that
$$
\max\{ H(P(X+ p r_i)) \,:\,0\le i\le k\} \le C_2 H(P).
$$
Let $M$ be an integer greater than $C_1C_2$ and not divisible by
$p$. By definition of $w_k (\xi')$, there exists a sequence of
polynomials $(P_j)_{j\ge 1}$ in $\Z[X]$ of degree at most $k$ such
that $H(P_{j+1}) > H(P_j)$ for $j \ge 1$ and
\begin{equation}
 \label{eq2}
 \lim_{j\to\infty} -\frac{ \log|P_j(\xi')|_p}{\log H(P_j)} = w_k(\xi) + 1.
\end{equation}
Furthermore, since $|\xi'|_p = 1$, we may assume that, for $j \ge 1$,
the constant coefficient of $P_j (X)$ is
not divisible by $p$. In particular, $P_j (p r_i)$ is not divisible by $p$, for $j \ge 1$ and $0 \le i \le k$.

There exist $i$ in $\{0,1,\dots,k\}$ and an
infinite set ${\mathcal S}$ of positive integers such that $H(P_j) \le C_1 |P_j(p r_i)|$ for every $j$ in ${\mathcal S}$.
Let $j$ be in ${\mathcal S}$. Set
\[
Q_j(X)=(MX)^k P_j\Big(\frac{1}{M X}+ p r_i\Big) \in \Z[X].
\]
The absolute value of the coefficient
of $X^k$ in $Q_j(X)$ is $M^k |P_j(p r_i)|$, while its other coefficients
have absolute value at most
\[
 M^{k-1} H(P_j(X+ p r_i))
  \le C_2M^{k-1} H(P_j)
  \le C_1C_2M^{k-1} |P_j(p r_i)|
  \le M^{k} |P_j(p r_i)|.
\]
Consequently, the absolute value of the leading coefficient of $Q_j
(X)$ is equal to the height of $Q_j(X)$ and is not divisible by $p$.
We also have $|p r_i - \xi'|_p=1$ and, setting
$$
\xi_i= \frac{1}{M(\xi'- p r_i)},
$$
we check that $|\xi_i|_p = 1$ and
\[
 |Q_j(\xi_i)|_p = |M \xi_i|_p^k |\, P_j(\xi')|_p = |P_j(\xi')|_p.
\]
Since the quotient $H(Q_j) / H(P_j)$ is
bounded from above and from below by positive constants, we deduce from \eqref{eq2} that
$$
 \lim_{j\to\infty, j \in S}  - \frac{\log|Q_j(\xi_i)|_p}{\log H(Q_j)} = w_k(\xi) + 1.
$$
This shows that $w_k^{{\rm lead}}(\xi_i)\ge w_k(\xi)$.
As $\xi_i$ is the image
of $\xi'$ by a linear fractional transformation with rational
coefficients, we have
$w_k(\xi_i) =w_k(\xi') = w_k(\xi)$.  Since
$w_k(\xi_i)\ge w_k^{{\rm lead}}(\xi_i)$, this proves the lemma.
\end{proof}

\begin{proof}[Proof of Theorem \ref{Th:3.1}]
The case $k=1$ has been established in \cite{Jar39}.  
Let $k, n$ be integers with $2 \le k \le n$.
Let $\xi$ be a transcendental $p$-adic number.
Let $\eps$ be a positive real number.

Assume first that $w_k (\xi) = w_k^{{\rm lead}}(\xi)$ and that $w_k  (\xi)$ is finite.
For arbitrarily large integers $H$, there exist integers $a_0, a_1, \ldots , a_k$, not all zero,
such that $ H = |a_k| = \max\{ |a_0|, |a_1|, \ldots , |a_k|\}$ and $p$ does not divide $a_k$, and 
\begin{equation}
\label{3.0}
H^{-w_k(\xi) - 1 - \eps} \le  |a_k \xi^k + \ldots + a_1 \xi + a_0|_p \le H^{-w_k(\xi) - 1 + \eps}.
\end{equation}
Take such an integer $H$ and set
$$
\rho := a_k \xi^k + \ldots + a_1 \xi + a_0.
$$
Observe that the absolute value of the determinant of the $(n+1) \times (n+1)$ matrix
$$
M:=\left(\begin{matrix} 1 & 0 &0&\cdots& 0 &\cdots&\cdots&\cdots&0\cr
 0& 1& 0 & &\cdots& 0 & \cdots&\cdots&0\cr
\vdots&\vdots&\vdots&\ddots&\vdots & \vdots &\vdots&&\vdots\cr
0 &0&0&\cdots & 1 & 0 &0&\cdots&0\cr
a_0&a_1&a_2&\cdots & a_{k-1} &a_k&0&\cdots&0\cr
0&a_0&a_1&\cdots & a_{k-2} &a_{k-1}&a_k&\cdots&0\cr
\vdots&\vdots&\vdots&\ddots& \vdots & \vdots&\vdots&\ddots&\vdots\cr
0&0&0&\cdots&\cdots & \cdots &\cdots&a_{k-1}&a_k\\
\end{matrix}\right).
$$
is equal to $H^{n-k+1}$.
It then follows from Satz 1 of Mahler \cite{Mah34} that
there exist integers $v_0, \ldots , v_n$, not all zero, such that
$$
|v_0 \xi^j - v_j|_p \le |\rho|_p, \quad 1 \le j \le k-1,
$$
$$
|a_0 v_i + a_1 v_{i+1} + \ldots + a_k v_{i+k}| \le p^{-1}, \quad 0 \le i \le n-k,
$$
$$
|v_i| \le (p H)^{(n-k+1)/k} |\rho|_p^{-(k-1)/k}, \quad 0 \le i \le k-1.
$$
Since the $a_j$'s and $v_j$'s are integers, we get that
$$
a_0 v_i + a_1 v_{i+1} + \ldots + a_k v_{i+k} = 0, \quad 0 \le i \le n-k.
$$
Taking $i=0$ above and recalling that $|a_k| = \max\{ |a_0|, |a_1|, \ldots , |a_k|\}$,
we get that $|v_k| \le k (pH)^{(n-k+1)/k} |\rho|_p^{-(k-1)/k}$ and,
by increasing $i$ step by step, in a similar way
we estimate
$$
|v_{k+1}|, \ldots , |v_n| \le k^{n-k} (pH)^{(n-k+1)/k} |\rho|_p^{-(k-1)/k}.   
$$
Furthermore, for $i=0, \ldots , n-k$, we have
$$
\begin{array}{rl}
|a_k v_0 \xi^{i+k}  - a_k v_{i+k}|_p &=\displaystyle  \bigl| v_0 \left(
{a_{k-1} \xi^{i+k-1} + \ldots + a_1 \xi^{i+1} + a_0 \xi^i - \rho
\xi^i } \right) \\[2ex]
&\displaystyle  \hskip 18mm  - {(a_0 v_i + a_1 v_{i+1} + \ldots +
a_{k-1} v_{i+k-1} )} \bigr|_p. \\
\end{array}
$$
Since $|a_k|_p = 1$, we derive inductively that
\begin{equation}
\label{3.4}
|v_0 \xi^{i+k} - v_{i+k}|_p  \ll_{p, n, \xi}  |\rho|_p, \quad i=0, \ldots , n-k.
\end{equation}
Recall that
\begin{equation}
\label{3.5}
\max\{|v_0|, |v_1|, \ldots , |v_n|\} \ll_{p, n} H^{(n-k+1)/k} |\rho|_p^{-(k-1)/k}.
\end{equation}
Since $\eps$ can be taken arbitrarily close to $0$,
we deduce at once from \eqref{3.0}, \eqref{3.4}, and \eqref{3.5} that
$$
\lambda_n (\xi) \ge {w_k  (\xi) - n + k \over (k-1) w_k  (\xi) + n}.
$$
An inspection of the proof shows that it yields $\lambda_n (\xi) \ge 1 / (k-1)$
when $w_k  (\xi)$ is infinite, so \eqref{3.1} holds in all cases, provided that $w_k (\xi) = w_k^{{\rm lead}}(\xi)$.

If $w_k (\xi) > w_k^{{\rm lead}}(\xi)$, then we apply \eqref{3.1} with $\xi$ replaced by
a $p$-adic number $\xi_i$ satisfying the conclusion of Lemma \ref{ChampRoy}.
Since $\lambda_n (\xi) = \lambda_n (\xi_i)$, we deduce that \eqref{3.1} holds in all cases.
\end{proof}

\begin{proof}[Proof of Theorem \ref{Th:2.4}]
Write $\lambda_k = \lambda_k (\xi)$.
Assume that $\lambda_k$ is finite (otherwise, it follows from
\eqref{ApproxCor} and \eqref{ApproxCorBis} that $\lambda_{k+1} (\xi)$ is infinite and we are done).
Let $\eps$ be a positive real number.
There exist
integers $q, v_1, \ldots , v_k$,
with $Q = \max\{|q|, |v_1|, \ldots , |v_n|\}$ arbitrarily large, such that
\begin{equation}
\label{3.7}
Q^{-\lambda_k - 1 - \eps} \le  \max\{ |q \xi - v_1|_p, \ldots , | q \xi^k - v_k|_p \}
\le Q^{-\lambda_k - 1 + \eps},
\end{equation}
where $Q = \max\{|q|, |v_1|, \ldots , |v_k|\}$.
Take such an integer $Q$. 
In particular, $q$ is nonzero.
It follows from Siegel's lemma (see \cite[Lemma 2.9.1]{BoGu06}) that
there exist integers $a_0, a_1, \ldots , a_k$, not all zero, such that
$$
a_0 q + a_1 v_1 + \ldots + a_k v_k = 0
$$
and
$$
1 \le H := \max\{ |a_0|, |a_1|, \ldots , |a_k|\}  
\ll_{k} \, Q^{1/k}.
$$
We may assume $a_{k}\neq 0$, otherwise we replace $k$ 
by the largest index $j$ with $a_{j}\neq 0$ in the argument below. 
Then, we derive from \eqref{3.7} that
\begin{equation}\label{3.7a}
\begin{array}{rl}
|q(a_k \xi^k + \ldots + a_1 \xi + a_0) |_p & = \\[1.5ex]
|q(a_k \xi^k + \ldots + a_1 \xi + a_0) - (a_k v_k + \ldots + a_1 v_1 + a_0 q)|_p
& \leq \, Q^{-\lambda_k - 1 + \eps}.   \cr
\end{array}
\end{equation}
Using triangle inequalities, we get from \eqref{3.7} and
\eqref{3.7a} that
\begin{equation}
\label{3.8} \begin{array}{rl}
 &  | a_k q \xi^{k+1}  + a_{k-1} v_k + a_{k-2} v_{k-1}+\ldots+a_1 v_2 + a_0 v_1|_p \\[1.5ex]
& \leq  \max\{| q (a_k \xi^{k+1} +  \ldots + a_0 \xi)|_p,
|q (a_{k-1} \xi^k + \ldots + a_0 \xi) - a_{k-1} v_k -  \ldots  - a_0 v_1|_p\}  \\[1.5ex]
 &\ll_{\xi} \max\{ |q(a_k \xi^k + \ldots + a_1 \xi + a_0)|_p , Q^{-\lambda_k - 1 + \eps} \}    \\[1.5ex]
& \ll_{\xi} Q^{-\lambda_k - 1 + \eps}.   \cr
\end{array}
\end{equation}
Since $a_{k}\neq 0$,
it now follows from
$$
|a_{k-1} v_k + a_{k-2} v_{k-1}+\ldots+a_1 v_2 + a_0 v_1| \ll_{k}  Q^{1 + 1 / k},
$$
\eqref{3.7}, and \eqref{3.8} that
$$
1 + \lambda_{k+1} (\xi) \ge {\lambda_k (\xi)  + 1 - \eps \over 1 + 1/k}.
$$
As $\eps$ can be chosen arbitrarily close to $0$, we deduce that
$$
(k+1) \bigl( 1 + \lambda_{k+1} (\xi) \bigr) \ge k \bigl( 1 + \lambda_k (\xi) \bigr).
$$
This concludes the proof of the first inequality of the theorem. By iterating it, we
immediately get the last inequality. In particular, we obtain
\begin{equation}
\label{TwoIneq}
\hbox{$(k +1) ( \lambda_{k+1} (\xi) + 1) \ge \lambda_1 (\xi) + 1 \ $ and
$\ k   (\lambda_k (\xi) + 1) \ge \lambda_1 (\xi)  + 1$}.
\end{equation}

Assume now that $\lambda_{k+1} (\xi) > 1$. Then, we also have $\lambda_{k} (\xi) > 1$ and we get
from \eqref{ApproxCor} that
$\lambda_1 (\xi) \ge (k+1) (1 + \lambda_{k+1} (\xi) ) - 1$ and
$\lambda_1 (\xi) \ge k (1 + \lambda_k (\xi) ) - 1$. Combined with \eqref{TwoIneq}, this gives at once
$$
(k+1) \bigl( 1 + \lambda_{k+1} (\xi) \bigr) = k \bigl( 1 + \lambda_k (\xi) \bigr).
$$
By iterating this equality, we obtain that the last inequality of the theorem
is an equality when $\lambda_n (\xi)$ exceeds $1$.
\end{proof}

\section{Proof of Theorem \ref{Th:2.3}}

The following result, established by Bernik and Morotskaya \cite{BeMo86,Moro87} (see also
\cite[Section 6.3]{BeDo}), is a key ingredient in the proof of Theorem \ref{Th:2.3}.

\begin{theorem}
\label{Th:Moro}
For every positive integer $n$ and every real number $w$ with $w \ge n$, we have
\begin{equation}
\label{1.2}
\dim \{ \xi \in \Q_p : w_n (\xi) \ge w \} = {n + 1 \over w + 1}.
\end{equation}
\end{theorem}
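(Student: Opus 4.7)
The plan is to establish both inequalities in \eqref{1.2} by adapting the classical Jarn\'\i k--Besicovich strategy to $\Q_p$. Write $W_n(w) := \{\xi \in \Q_p : w_n(\xi) \ge w\}$. By the scaling remark opening Section~3, it suffices to compute the Hausdorff dimension of $W_n(w) \cap \{\xi : |\xi|_p = 1\}$, so I would restrict attention throughout to $\xi \in \ZZ_p^\times$.

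For the upper bound $\dim W_n(w) \le (n+1)/(w+1)$, the natural approach is a $\limsup$ covering. For a non-zero integer polynomial $P$ of degree at most $n$, set
$$
B(P) := \{\xi \in \ZZ_p : 0 < |P(\xi)|_p \le H(P)^{-w-1}\};
$$
by Definition~\ref{Def:2.1} any $\xi$ with $w_n(\xi) \ge w$ lies in $B(P)$ for infinitely many~$P$. Factoring $P$ over an algebraic closure of $\Q_p$ and using the ultrametric inequality, $B(P)$ is contained in the union of at most $n$ $p$-adic balls, each of radius $\asymp H(P)^{-w-1} |P'(\alpha)|_p^{-1}$ around a root $\alpha$ of $P$. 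Bucketing polynomials by dyadic height $H(P) \in [2^N, 2^{N+1})$, of which there are $\asymp 2^{N(n+1)}$, the Hausdorff $s$-sum $\sum_N 2^{N(n+1)} \cdot 2^{-Ns(w+1)}$ converges for $s > (n+1)/(w+1)$, provided the contribution of those $P$ for which $|P'(\alpha)|_p$ is anomalously small is shown to be negligible; this is the $p$-adic analogue of Sprind\v{z}uk's resultant/discriminant analysis.

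For the lower bound, the systematic route is to verify that the $p$-adic algebraic numbers of degree at most $n$, equipped with the height of the minimal polynomial, form a \emph{regular system} in the sense of Baker--Schmidt, and then to apply the standard Cantor-type construction (or, equivalently, the $p$-adic mass transference principle) to exhibit a subset of $W_n(w)$ of Hausdorff dimension at least $(n+1)/(w+1)$. The counting input one needs is a quantitative lower bound on the number of $p$-adic algebraic numbers of degree at most $n$ and height at most $H$ lying in any prescribed $p$-adic ball of radius $\asymp H^{-(n+1)}$, which is supplied by a Minkowski-type existence theorem (Mahler's Satz~1 in \cite{Mah34}) combined with root-separation estimates. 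The main obstacle in both halves is the same one: isolating and discarding the negligible but subtle contribution of integer polynomials whose discriminants or pairwise resultants are $p$-adically small, so that generic expectations on the geometry of $B(P)$ near the roots hold; this delicate step is precisely the technical heart of \cite{BeMo86,Moro87}.
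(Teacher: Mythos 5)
The paper does not prove Theorem~\ref{Th:Moro}; it is invoked as a known result, with the proof attributed entirely to Bernik and Morotskaya \cite{BeMo86,Moro87} (see also \cite[Section~6.3]{BeDo}). So there is no ``paper's own proof'' to compare against, and your submission should be read as an independent sketch of what such a proof must contain.

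As such a sketch, your outline is essentially the correct one, and it matches the strategy of the cited sources in spirit: a covering argument with the Hausdorff $s$-sum $\sum_N 2^{N(n+1)}\,2^{-Ns(w+1)}$ for the upper bound (convergent precisely for $s > (n+1)/(w+1)$), and a regular-system / mass-transference construction for the lower bound, with the counting input supplied by a $p$-adic Minkowski/Dirichlet statement (Mahler's Satz~1) plus root-separation estimates. You are also right that the genuine difficulty in both halves is controlling the exceptional polynomials $P$ for which $|P'(\alpha)|_p$ (equivalently, the relevant discriminant or resultant) is $p$-adically small, so that the ball $B(P)$ around a root $\alpha$ has the anticipated radius $\asymp H(P)^{-w-1}|P'(\alpha)|_p^{-1}$.

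The gap, which you acknowledge but do not close, is exactly this step: you state that the contribution of polynomials with anomalously small $|P'(\alpha)|_p$ is ``shown to be negligible,'' and that verifying the regular-system property is ``supplied by'' Mahler's theorem plus root separation, but neither claim is elementary and neither follows from the material in this paper. In the real case this is the core of Bernik's work on the Baker--Sprind\v{z}uk problem, and the $p$-adic analogue is precisely what \cite{BeMo86,Moro87} establish. Two smaller points worth flagging: first, for the upper-bound covering you must also justify that $B(P)$ splits into $O(n)$ balls of the stated radius uniformly, which uses an ultrametric Taylor expansion around the nearest root and requires the very discriminant control you defer; second, for the lower bound the regular-system count should be stated at radius somewhat larger than the threshold $H^{-(n+1)}$ (one algebraic point per ball is not enough for a regular system --- you need the optimal order $\asymp r H^{n+1}$ of well-separated points in balls of radius $r \gg H^{-(n+1)}$). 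None of this is a flaw in your understanding of the structure of the argument; it simply means that, like the paper, you are ultimately citing \cite{BeMo86,Moro87} for the hard content rather than reproving it.
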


We observe that Theorem \ref{Th:Moro} extends \eqref{1.1}.

\begin{proof}[Proof of the first assertion of Theorem \ref{Th:2.3}]
Let $k, n$ be integers with $1 \le k \le n$.
For $k=1$, inequality \eqref{2.1} follows from combining \eqref{1.1}
with the inclusion
\[
\{ \xi\in \Q_p: \lambda_{n}(\xi)\geq \lambda \} \supset 
\{ \xi\in \Q_p: \lambda_{1}(\xi)\geq n\lambda+n-1 \}
\]
that follows
from the consequence $\lambda_{n}(\xi)\geq (\lambda_{1}(\xi)-n+1)/n$ of Theorem~\ref{Th:2.4}.
For $k \ge 2$
and $\lambda$ in $[1/n,1/(k-1))$,
inequality~\eqref{3.1}
implies that
\begin{equation}
\label{Mo}
\{\xi\in \Q_p : \lambda_n(\xi)\ge \lambda\} \supset \left\{\xi\in \Q_p :
w_k (\xi) \ge {(\lambda+1)n-k \over
1-\lambda(k-1)}\right\}.
\end{equation}
By Theorem \ref{Th:Moro}, this yields \eqref{2.1}.
Finally, the assertion 
is trivial if $\lambda\geq 1/(k-1)$.
\end{proof}

The proof of the second assertion of Theorem \ref{Th:2.3} requires more work.
We keep the same steps as in the proof of its real analogue.
Instead of appealing to a result of Davenport and Schmidt \cite{DaSc69} as in \cite{BaBu20}, we
make use of its $p$-adic analogue, which was proved by
Teuli\'e \cite[Lemme 3]{Teu02}.

We consider the $(n+1)$-tuples $\vp:= (q,p_1,p_2,\ldots, p_n)$ of
integers which approximate at least one point 
$(\xi,\xi^2,\ldots, \xi^n)$ on the Veronese curve, that is, which
satisfy
\begin{equation}
\label{4.1}
|q\xi^i - p_i|_p \ll_{\xi, n} Q^{-\lambda - 1}, \quad i = 1, \ldots , n, \quad   
\hbox{with $Q = \max\{|q|, |p_1|, \ldots , |p_n|\}$. }
\end{equation}
For convenience, we will often write $p_0$ instead of $q$.

Throughout this section, we extensively make use of matrices of the form
$$
\Delta_{m,k}:= \left( \begin{matrix} p_{k-m+1}&p_{k-m+2}&\cdots
&p_k\cr p_{k-m+2}&p_{k-m+3}&\cdots &p_{k+1}\cr
\vdots&\vdots&\ddots&\vdots\cr p_k&p_{k+1}&\cdots&p_{k+m-1} \\
\end{matrix}
\right).
$$
Observe that $\Delta_{m,k}$ is an $m \times m$ matrix with $p_k$ in its antidiagonal.
The matrices $\Delta_{2,k}$ have been used in the proof of Theorem \ref{Th:2.0}.

\begin{proposition}
\label{Prop:4.1}
Assume that a tuple $\vp = (p_0,\ldots, p_n)$ in $\ZZ^{n+1}$
satisfies \eqref{4.1} for some $p$-adic number $\xi$.  
Then, we have
\begin{equation}
\label{4.2a}
|p_i\xi - p_{i+1}|_p \ll_{\xi, n} Q^{-\lambda - 1}, \quad \hbox{for  $i\in\{0,\ldots, n-1\}$,}
\end{equation}
and
\begin{equation}
\label{4.2b}
|\det (\Delta_{2,i})|_p \ll_{\xi, n} Q^{-\lambda - 1}, \quad \hbox{for  $i\in\{1,\ldots, n-1\}$.}
\end{equation}
Conversely, if an integer tuple $\vp$ in $\ZZ^{n+1}$ with $|p_0|_p , \ldots , |p_n|_p \gg 1$ satisfies~\eqref{4.2b},
then there exists a $p$-adic number $\xi$ for which \eqref{4.1} is true.
\end{proposition}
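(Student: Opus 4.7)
The plan is to split the proof into the two directions and handle each with a short non-archimedean calculation.

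For the forward direction, to prove \eqref{4.2a} I would use the telescoping identity
$$
p_i\xi - p_{i+1} = (p_i - q\xi^i)\xi + (q\xi^{i+1} - p_{i+1}), \qquad 0 \le i \le n-1,
$$
(the $i=0$ case simply reduces to the $i=1$ instance of \eqref{4.1}, since $p_0 - q\xi^0 = 0$), and apply the ultrametric triangle inequality to the two bounds supplied by \eqref{4.1}. For \eqref{4.2b}, I would expand
$$
\det(\Delta_{2,i}) = p_{i-1}p_{i+1} - p_i^2 = p_{i-1}(p_{i+1} - \xi p_i) - p_i(p_i - \xi p_{i-1}),
$$
and combine the trivial estimate $|p_j|_p \le 1$ (valid for every integer $p_j$) with the bound \eqref{4.2a} just established.

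For the converse, the guiding idea is that the small Hankel-minor hypothesis forces all consecutive ratios $r_i := p_i/p_{i-1}$ to cluster $p$-adically within $\ll Q^{-\lambda-1}$, so that their approximate common value can serve as $\xi$. Using $|p_j|_p \gg 1$, each $r_i$ is a well-defined element of $\Q_p$ with $|r_i|_p \asymp 1$, so in particular $\xi := r_1 = p_1/p_0$ is a legitimate candidate. From the identity
$$
r_{i+1} - r_i = \frac{p_{i-1}p_{i+1} - p_i^2}{p_{i-1} p_i} = \frac{\det(\Delta_{2,i})}{p_{i-1} p_i},
$$
the hypothesis \eqref{4.2b} together with the lower bound on $|p_{i-1} p_i|_p$ yields $|r_{i+1} - r_i|_p \ll Q^{-\lambda-1}$. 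A single ultrametric step then gives $|\xi - r_{i+1}|_p \ll Q^{-\lambda-1}$ for every $i$, and multiplying by $p_i$ produces the Veronese-edge estimate $|p_i\xi - p_{i+1}|_p \ll Q^{-\lambda-1}$.

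To conclude \eqref{4.1} itself, I would run a short induction on $i$ based on
$$
q\xi^i - p_i = \xi(q\xi^{i-1} - p_{i-1}) + (p_{i-1}\xi - p_i),
$$
with base case $q\xi - p_1 = p_0\xi - p_1 = 0$. The only mild bookkeeping is that the factor $|\xi|_p$ accumulates into $|\xi|_p^{i-1}$ after $i$ steps; but $|\xi|_p = |p_1|_p/|p_0|_p$ is bounded by a constant depending only on the implicit lower bound in $|p_j|_p \gg 1$, and after at most $n$ iterations this stays a constant of the correct shape to be absorbed into the $\ll_{\xi,n}$ notation of \eqref{4.1}. I anticipate no serious obstacle; every step reduces to a one-line application of the strong triangle inequality in $\Q_p$, and the only genuine idea is the Hankel-ratio observation underlying the converse.
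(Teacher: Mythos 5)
Your argument is correct and follows essentially the same route as the paper's: the forward direction is exactly the telescoping identity and Hankel-minor decomposition used at the start of the proof of Theorem~\ref{Th:2.0} (to which the paper defers), and the converse takes $\xi=p_1/p_0$, controls the consecutive ratios via $\det(\Delta_{2,i})/(p_{i-1}p_i)$, and closes with the same induction $q\xi^{i+1}-p_{i+1}=\xi(q\xi^i-p_i)+(p_i\xi-p_{i+1})$. The only point worth noting explicitly (which you handle correctly) is that the factors of $|\xi|_p$ picked up in the telescoping and the induction are harmless because they are absorbed into the $\ll_{\xi,n}$ constants, and in the converse $|\xi|_p\asymp1$ by the hypothesis $|p_0|_p,\ldots,|p_n|_p\gg1$.
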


\begin{proof}
For the first two statements, see the beginning of the proof of Theorem \ref{Th:2.0} and observe that 
the assumption $|\xi|_p = 1$ can be removed.  

For the last statement, consider an integer tuple $\vp$ which satisfies \eqref{4.2b}.
Then, for
$i = 1, \ldots , n-1$,  we have
$$
|p_i^2 -  p_{i-1}  p_{i+1} |_p \ll_{\xi, n} Q^{-1-\lambda}.
$$
Setting $\xi:= p_1/p_0$, these inequalities yield,
for $i$ in $\{ 0,1,\ldots,n-1 \}$,
$$
\left|\xi - {p_{i+1} \over p_i}\right|_p \ll_{\xi, n} Q^{-1-\lambda}, \quad
\hbox{thus} \quad |p_i\xi - p_{i+1}|_p \ll_{\xi, n} Q^{-1 - \lambda}.
$$
Now we use induction on $i$. For $i=0$
and setting $q=p_{0}$,
the statement $|q\xi - p_1|_p \ll_{\xi, n}
Q^{-1 - \lambda}$ follows from the last estimate. Assuming that~\eqref{4.1}
is true
for $i$ in $\{ 0,1,\ldots,n-1 \}$,
we deduce from
$$
|q\xi^{i+1} - p_{i+1}|_p  = |(q\xi^i - p_i)\xi + p_i\xi - p_{i+1}|_p \ll_{\xi, n} Q^{-1 - \lambda},
$$
that it is also true for $i+1$.
\end{proof}

The next proposition is the $p$-adic analogue of
 \cite[Proposition~4.2]{BaBu20}.

\begin{proposition}
\label{Prop:4.2}
Let $\vp$ be in $\ZZ^{n+1}$ which
satisfies~\eqref{4.2b}. 
Let $\xi$ be given by the last assertion of Proposition~\ref{Prop:4.2}.  
Then, for any positive integers $m,k$ with $k-m+1\ge 0$ and $k+m-1\le n$, we
have
$$
|\det (\Delta_{m,k}) |_p \ll_{\xi, n} Q^{ - (m-1) (\lambda + 1)}.
$$
\end{proposition}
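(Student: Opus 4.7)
The plan is to reduce the general case to the $m=2$ case (which is exactly hypothesis \eqref{4.2b}) via column operations performed over $\Q_p$. Denote by $C_j$ the $j$-th column of $\Delta_{m,k}$, so that the $i$-th entry of $C_j$ is $p_{k-m+i+j-1}$. I would simultaneously replace each $C_j$, for $j=2,\ldots,m$, by $C_j-\xi C_{j-1}$. This transformation corresponds to right-multiplying $\Delta_{m,k}$ by a bidiagonal upper triangular matrix with ones on the diagonal and $-\xi$ on the superdiagonal; such a matrix has determinant $1$, so the resulting matrix has the same determinant as $\Delta_{m,k}$.

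After this manipulation, the first column still consists of the integers $p_{k-m+i}$, whose $p$-adic absolute values are at most $1$, while the $i$-th entry of the new $j$-th column (for $j\ge 2$) equals $p_{k-m+i+j-1}-\xi\,p_{k-m+i+j-2}$. Setting $\ell=k-m+i+j-2$, the constraints $k-m+1\ge 0$ and $k+m-1\le n$, combined with $1\le i\le m$ and $2\le j\le m$, force $0\le \ell\le n-1$, so inequality \eqref{4.2a} applies and yields
\[
|p_{k-m+i+j-1}-\xi\,p_{k-m+i+j-2}|_p \;\ll_{\xi,n}\; Q^{-\lambda-1}.
\]

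Expanding the transformed determinant by the Leibniz formula, each of the $m!$ terms is a product of one entry from column $1$ (of $p$-adic absolute value at most $1$) and one entry from each of columns $2,\ldots,m$ (each of $p$-adic absolute value $\ll_{\xi,n} Q^{-\lambda-1}$), hence has $p$-adic absolute value $\ll_{\xi,n} Q^{-(m-1)(\lambda+1)}$. The ultrametric inequality then gives the desired bound $|\det(\Delta_{m,k})|_p\ll_{\xi,n} Q^{-(m-1)(\lambda+1)}$. There is no substantial obstacle here beyond careful index bookkeeping to ensure that every invocation of \eqref{4.2a} is legal; the argument relies only on the equivalence between elementary column operations and right-multiplication by unipotent matrices, and on the non-archimedean nature of $|\cdot|_p$, which prevents the sum over $m!$ permutations from inflating the estimate.
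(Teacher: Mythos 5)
Your proof is correct and follows essentially the same strategy as the paper: perform column operations to make all but one column $p$-adically small, then expand by Leibniz and use the ultrametric inequality. The only difference is the specific operation — you use the unipotent transformation $C_j \mapsto C_j - \xi C_{j-1}$, which directly produces nearest-neighbour differences controlled by \eqref{4.2a} and is determinant-preserving, whereas the paper replaces $C_h$ by $\xi^{m-h}C_h - C_m$ and must first establish the telescoped bound $|p_j\xi^{i-j}-p_i|_p \ll_{\xi,n} Q^{-\lambda-1}$; your variant is marginally cleaner but the underlying idea is identical.
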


\begin{proof}
For integers $i, j$ with $0 \le j < i \le n-1$, observe that $|p_j \xi^{i - j} - p_i |_p$ is,
up to a factor that depends on $\xi$ only,
at most equal to
$$
\max\{|p_j \xi^{i - j } - p_{j+1} \xi^{i - j -1}|_p, |p_{j+1} \xi^{i - j - 1} - p_{j+2} \xi^{i - j - 2}|_p, \ldots , |p_{i-1} \xi  - p_{i} |_p \},
$$
thus, by~\eqref{4.2a}, we have
$$
|p_j \xi^{i - j} - p_i |_p \ll_{\xi, n} Q^{ - (\lambda + 1)}.
$$
To compute the determinant of the matrix $\Delta_{m,k}$, we replace its $h$-th column by
$\xi^{m-h}$ times this column minus the last column. Then, we expand the determinant as a sum of $m!$ products
of $m$ terms. Each of these product is the product of an integer by $m-1$ terms which are $p$-adically
 $\ll_{\xi, n} Q^{ - (\lambda + 1)}$.
This proves the proposition.
\end{proof}

The proof of Proposition \ref{Prop:4.2} can easily be adapted to show the next proposition, which is more
general.

\begin{proposition}
\label{Prop:4.3}
Let $\vp$ be in $\Z^{n+1}$ which
satisfies~\eqref{4.2b}  
and $m$
a positive integer. 
Let $\xi$ be given by the last assertion of Proposition~\ref{Prop:4.2}.   
For $i=0, \ldots , n-m+1$, let
$\vy_i$ denote the vector $(p_i,p_{i+1},\ldots, p_{i+m-1})$.
Then, for any sequence $c_1,c_2,\ldots, c_m$ of integers in
$\{0, \ldots , n-k+1\}$, the determinant $d(c_1,\ldots, c_m)$ of the
$m \times m$ matrix composed of the
vectors $\vy_{c_1}, \vy_{c_2},\ldots, \vy_{c_m}$ satisfies
$$
|d(c_1,\ldots, c_m)|_p \ll_{\xi, n} Q^{-(m-1) (\lambda + 1)}.
$$
\end{proposition}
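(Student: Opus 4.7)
The plan is to carry out the same column-reduction idea as in the proof of Proposition~\ref{Prop:4.2}, with the constant shifts $\xi^{m-h}$ replaced by tailored shifts $\xi^{c_m - c_h}$ matching the (now arbitrary) column indices $c_1, \ldots, c_m$.

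First I would make a few harmless reductions. If two of the $c_i$ coincide, the determinant vanishes and the bound is trivial, so I may assume the $c_i$ are pairwise distinct; after permuting columns (which only changes the sign of the determinant) I may also assume $c_1 < c_2 < \cdots < c_m$. As observed at the start of Section~3, I may further assume $|\xi|_p = 1$.

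The key input, already established inside the proof of Proposition~\ref{Prop:4.2}, is the estimate
$$
|p_a \xi^{b-a} - p_b|_p \ll_{\xi, n} Q^{-(\lambda+1)}, \qquad 0 \le a < b \le n.
$$
Applied to each pair of corresponding entries of $\vy_{c_h}$ and $\vy_{c_m}$, this shows that, for every $h \in \{1, \ldots, m-1\}$, every coordinate of the vector $\xi^{c_m - c_h} \vy_{c_h} - \vy_{c_m}$ has $p$-adic absolute value $\ll_{\xi, n} Q^{-(\lambda+1)}$. I would then perform the following column operation on the matrix with columns $\vy_{c_1}, \ldots, \vy_{c_m}$: for each $h = 1, \ldots, m-1$, replace the $h$-th column by $\xi^{c_m - c_h}$ times itself minus the $m$-th column. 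The scalings multiply the determinant by a factor $\xi^{\sum_{h=1}^{m-1}(c_m - c_h)}$ of $p$-adic absolute value $1$, while the subsequent subtractions of column $m$ do not affect the determinant. Hence $|d(c_1, \ldots, c_m)|_p$ is unchanged, yet every entry of the first $m-1$ columns of the new matrix is $\ll_{\xi, n} Q^{-(\lambda+1)}$ in $p$-adic norm, and the $m$-th column remains an integer vector whose entries have $p$-adic norm at most $1$.

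Expanding the new determinant by the Leibniz formula as a signed sum of $m!$ products, each containing exactly one factor from each column, yields products of $m-1$ small entries and one bounded entry; the ultrametric inequality then delivers the claimed bound $|d(c_1, \ldots, c_m)|_p \ll_{\xi, n} Q^{-(m-1)(\lambda+1)}$. There is no genuine obstacle in this argument: it is a routine adaptation of the proof of Proposition~\ref{Prop:4.2}, and the only new feature, namely the general shift $\xi^{c_m - c_h}$ instead of a shift by consecutive powers, is harmless in $p$-adic norm precisely because $|\xi|_p = 1$.
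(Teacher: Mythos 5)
Your proposal is correct and is precisely the adaptation the paper intends when it states that "the proof of Proposition~\ref{Prop:4.2} can easily be adapted": you replace the shift $\xi^{m-h}$ (which is $\xi^{c_m-c_h}$ in the consecutive case $c_h=k-m+h$) by the general shift $\xi^{c_m-c_h}$, and the rest of the argument goes through verbatim. The preliminary reductions (degenerate case $c_i=c_j$, reordering columns, $|\xi|_p=1$) are sound, and the scaling factor being a bounded power of $\xi$ is harmlessly absorbed into the $\ll_{\xi,n}$ constant.
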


The next auxiliary result is \cite[Lemme 3]{Teu02}.

\begin{theorem}
\label{Th:DS}
Let $a_0,a_1,\ldots, a_h$ be integers with no
common factor throughout. Assume that, for some non-negative
integers $t,k$ with $k+h-1\le t$ and $t+h\le n$, the integers
$p_k,p_{k+1},\ldots, p_{t+h}$ are related by the recurrence relation
$$
a_0p_i+a_1p_{i+1}+\cdots+ a_{h}p_{i+h} = 0, \quad k\le i\le t.
$$
Let $Z$ be the maximum of the absolute values of all the $h\times h$
determinants formed from any $h$ of the vectors $\vy_i:=(p_i,
p_{i+1}, \ldots, p_{i+h-1})$, $i = k, \ldots , t+1$. Let $p^{-
\rho}$ with $\rho > 0$ be the maximum of the $p$-adic absolute
values of all the $h\times h$ determinants formed from any $h$ of
the vectors $\vy_i:=(p_i, p_{i+1}, \ldots, p_{i+h-1})$, $i = k,
\ldots , t+1$. If $Z$ is non-zero, then
$$
\max\{|a_0|,|a_1|,\ldots,|a_h|\} \ll_{\xi, n} (Z p^{- \rho})^{1/(t-k-h+2)}.
$$
\end{theorem}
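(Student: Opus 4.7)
The plan is to convert the scalar recurrence into vector recurrences on the $h$-dimensional vectors $\vy_i$, and then to combine the archimedean and $p$-adic bounds on the $h\times h$ minors of the matrix $(\vy_k,\vy_{k+1},\dots,\vy_{t+1})$ via the coprimality $\gcd(a_0,\dots,a_h)=1$. The number $N := t - k - h + 2$ of available vector relations $a_0 \vy_i + \cdots + a_h \vy_{i+h} = 0$, with $i \in [k, t-h+1]$, matches the exponent $1/N$ in the conclusion, which strongly suggests that the proof should iterate a basic shift identity $N$ times.

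First I would establish a key shift identity by expanding the consecutive minor $D_{i+1} := \det(\vy_{i+1}, \vy_{i+2}, \ldots, \vy_{i+h})$ and substituting $a_h \vy_{i+h} = -\sum_{l<h} a_l \vy_{i+l}$ into its last column. All but one of the resulting terms coincide with earlier columns and vanish, yielding $a_h D_{i+1} = (-1)^h a_0 D_i$. Iterating $N$ times gives
\[
a_h^N D_{k+N} = (\pm a_0)^N D_k.
\]
Since $\gcd(a_0, a_h)$ divides $\gcd(a_0,\ldots,a_h)=1$, one has $\gcd(a_0^N, a_h^N) = 1$, so $a_h^N$ divides $D_k$. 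Combined with the divisibility $p^\rho \mid D_k$ (which follows from the hypothesis that every $h\times h$ minor has $p$-adic absolute value at most $p^{-\rho}$) and the size bound $|D_k|\le Z$, this yields $|a_h|^N \le Zp^{-\rho}$, and symmetrically $|a_0|^N \le Zp^{-\rho}$.

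For the intermediate coefficients $a_1, \ldots, a_{h-1}$, I would apply Cramer's rule to the $h \times (h+1)$ matrix $(\vy_i, \ldots, \vy_{i+h})$: its one-dimensional kernel is spanned by $(a_0,\dots,a_h)$, so $a_j = \epsilon_j M_j^{(i)} / \alpha_i$, where $M_j^{(i)}$ is the signed $h\times h$ minor obtained by deleting the $j$-th column and $\alpha_i$ is a common integer factor. A variant of the shift identity of the previous paragraph yields $a_0 M_j^{(i+1)} = \pm a_h M_j^{(i)}$. Iterating this $N$ times, and using $|M_j^{(i)}|\le Z$ and $p^\rho \mid M_j^{(i)}$ together with the same coprimality arguments, gives $|a_j|^N \ll_{\xi,n} Zp^{-\rho}$ for every $j$.

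The main obstacle is nondegeneracy: every step above requires that certain minors $D_i$ or $M_j^{(i)}$ do not vanish. If they all vanished, then every consecutive $(h+1)$-block of vectors would have rank at most $h-1$, and a straightforward induction along the overlaps would force all of $\vy_k,\ldots,\vy_{t+1}$ to lie in a single $(h-1)$-dimensional subspace, contradicting $Z>0$. Hence one can choose the basepoint $i$ so that the iterative chain of nonzero determinants is available. The careful combinatorial bookkeeping to make this selection, and to handle the cases where some intermediate minors in the chain vanish (by replacing them with mixed, non-consecutive $h$-tuples realising the maximum $Z$), is the technical core of Teuli\'e's original proof in \cite{Teu02}.
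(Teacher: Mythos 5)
The paper itself does not prove this statement; it is quoted verbatim as \cite[Lemme~3]{Teu02}, the $p$-adic analogue of a lemma of Davenport and Schmidt, so I assess your proposal on its own merits. Your overall architecture — the shift identity $a_h D_{i+1}=(-1)^h a_0 D_i$ on consecutive Hankel minors, iterated $N=t-k-h+2$ times, followed by an arithmetic/size comparison — is indeed the right circle of ideas, and you correctly compute the exponent $N$ and verify the shift identity. But there are concrete errors in the arithmetic steps that break the argument as written.

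\textbf{The coprimality step is wrong.} You write ``Since $\gcd(a_0,a_h)$ divides $\gcd(a_0,\ldots,a_h)=1$, one has $\gcd(a_0^N,a_h^N)=1$.'' The divisibility runs the other way: $\gcd(a_0,\ldots,a_h)$ divides $\gcd(a_0,a_h)$, not conversely, and $\gcd(a_0,\ldots,a_h)=1$ carries no information about $\gcd(a_0,a_h)$. Concretely, $(a_0,a_1,a_2)=(6,10,15)$ is primitive while $\gcd(a_0,a_2)=3$. So from $a_h^N D_{k+N}=\pm a_0^N D_k$ you cannot conclude that $a_h^N$ divides $D_k$. Moreover, even granting $a_h^N\mid D_k$, to pass from this and $p^\rho\mid D_k$ to $a_h^N p^\rho\mid D_k$ (and hence to $|a_h|^N p^\rho\le Z$) one would also need $p\nmid a_h$, which is not assumed either. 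These are not cosmetic: the conclusion genuinely uses the full primitivity of the $(h+1)$-tuple, which can only be exploited by tracking all $h+1$ minors $M_j^{(i)}$ of the $h\times(h+1)$ matrix $(\vy_i,\ldots,\vy_{i+h})$ at once (so that $\gcd_j M_j^{(i)}=|d_i|$ with $a_j=\pm M_j^{(i)}/d_i$, and then $p^\rho\mid d_i$ follows from $p^\rho\mid M_j^{(i)}$ for all $j$ together with $\gcd_j a_j=1$), rather than the single pair $(a_0,a_h)$.

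Two further problems. In the Cramer part you assert the variant identity $a_0 M_j^{(i+1)}=\pm a_h M_j^{(i)}$; since $M_j^{(i)}=\pm d_i a_j$ and $d_i a_0=\pm d_{i+1} a_h$, the correct relation is $a_h M_j^{(i+1)}=\pm a_0 M_j^{(i)}$, i.e.\ you have $a_0$ and $a_h$ swapped. And the nondegeneracy argument — ``if all $D_i$ vanished then a straightforward induction forces all $\vy_i$ into an $(h-1)$-dimensional subspace'' — is not a valid deduction as stated: all consecutive $h\times h$ minors can vanish while non-consecutive ones do not (the induction step fails when the overlap of two degenerate consecutive blocks has rank $<h-1$), so one needs the Hankel structure of the $\vy_i$ in an essential way, which you do not use. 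You flag this last point yourself as the ``technical core'' deferred to Teuli\'e's paper, so the proposal is avowedly incomplete there; but the coprimality and $p\nmid a_h$ gaps in the preceding paragraphs are unflagged errors and need to be repaired by replacing the two-coefficient shift identity with the full minor-vector argument.
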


\begin{proof}[Proof of the second assertion of Theorem \ref{Th:2.3}]
Let $\lambda> 1/\lfloor(n+1)/2\rfloor$ be a
real number and set $m = 1 + \lfloor 1 / \lambda \rfloor$. Let $\xi$ be a
transcendental $p$-adic number such that $\lambda_n (\xi) \ge \lambda$ and
consider an $(n+1)$-tuple $\vp$ for which~\eqref{4.1} is satisfied and $Q$ is large
enough.

Let $h$ be the smallest non-negative integer number such that the matrix
$$
P_h:=\left(\begin{matrix} p_{0}&p_{1}&\cdots &p_{n-h-1}&p_{n-h}\cr
p_{1}&p_{2}&\cdots &p_{n-h}&p_{n-h+1}\cr
\vdots&\vdots&\ddots&\vdots&\vdots\cr
p_h&p_{h+1}&\cdots&p_{n-1}&p_{n} \cr\end{matrix} \right).
$$
has rank at most $h$. Obviously, $h\le \lceil {n+1 \over 2}\rceil$, because
for $\ell = \lceil {n+1 \over 2}\rceil$ the matrix $P_\ell$ has more rows
than columns and its rank is at most $\ell$. Also, we have $h\ge 1$ since
$\vp$ is not the zero vector. On the other hand, for $q=p_0$ large enough, we
get $h\le m$. Indeed,
consider $m+1$ arbitrary columns of the matrix $P_m$.
By Proposition~\ref{Prop:4.3}, the determinant of the   
integer matrix formed from these columns has $p$-adic absolute value at
most $c Q^{ -  m (\lambda + 1)}$ for some positive constant $c=c(n,\xi)$.
Since all its entries are integers of absolute value at most $Q$, the
$p$-adic absolute value of this determinant is either $0$, or at least
$Q^{- (m+1)}$.
Consequently, since
$\lambda>1/m$, for $Q$ large enough, this determinant is zero. Since
$\lambda> 1/\lfloor(n+1)/2\rfloor$, we have
\begin{equation}
\label{4.3}
h\le m\le \left\lfloor{n+1\over 2}\right\rfloor.
\end{equation}

By construction of the matrix $P_h$, there exist integers $a_0,a_1, \ldots, a_h$ with no
common factor such that
\begin{equation}
\label{4.4}
a_0p_i+a_1p_{i+1}+\cdots+ a_{h}p_{i+h} = 0, \quad 0\le i\le n-h.
\end{equation}
Note that the matrix $P_{h-1}$ has rank $h$ and therefore the value of $Z$,
defined in Theorem~\ref{Th:DS}, is non-zero. Moreover,
with $\rho$ defined as in Theorem~\ref{Th:DS}, 
 Propositions~\ref{Prop:4.2} and \ref{Prop:4.3} imply that
$$
Z p^{-\rho} \ll_{\xi, n} Q^h Q^{-(h-1)(\lambda + 1)}.
$$
From inequality~\eqref{4.3} we have $h-1\le n-h$ and
thus all the assumptions of Theorem~\ref{Th:DS} are satisfied. Applied with $k=0$
and $t = n- h$, it yields
$$
H:=\max\{|a_0|,|a_1|,\ldots, |a_h|\}\le (Z p^{-\rho})^{1/(n-2h+2)}
\ll_{\xi, n} Q^{ {h -(h-1)(\lambda + 1) \over n-2h+2}}.
$$

Consider the relation~\eqref{4.4} for $i=0$ and divide it by $p_0=q$. Then,
the condition~\eqref{4.1} implies that
$$
|a_h\xi^h+a_{h-1}\xi^{h-1}+\ldots+a_0|_p \ll_{\xi, n}
|q|_{p}^{-1} Q^{-1-\lambda} \ll_{\xi, n}
 Q^{-1-\lambda}\ll_{\xi, n} H^{ -
{(1+\lambda)(n-2h+2) \over h -(h-1)(\lambda + 1) }}.
$$
Here, in the second inequality,
we used that $|q|_{p}\gg_{\xi,n} 1$, by~\eqref{4.1}. Indeed, if otherwise
$|q|_{p}< \min\{|\xi|_{p}^{-n},1\}$, then, since $p$ does not divide all $p_{i}$, we get 
$|\xi^{i}q|_{p}<1=|p_{i}|_{p}$ and thus  
$|\xi^{i}q-p_{i}|_{p}= \max\{ |\xi^{i} q|_{p}, |p_{i}|_{p} \}=1$ 
for some $i$, in 
contradiction to~\eqref{4.1}.
See also the proof of Theorem~\ref{Th:2.1}.
For the moment assume $h-(h-1)(1+\lambda)>0$.
By applying Theorem \ref{Th:Moro}, we conclude that
$$
\dim \{ \xi \in \Q_p  : \lambda_n (\xi) \ge \lambda \} \le
\max_{1\le h\le m} \left\{ {(h+1)(h -(h-1)(\lambda + 1)) \over (n-2h+2)(1+\lambda)}\right\},
$$
which gives the expected result.
Finally, in the special case $h-(h-1)(1+\lambda)=0$, we get $h=m$ and 
$\lambda=1/(m-1)$. We can then let $\lambda$ 
tend to $1/(m-1)$ from below and use monotonicity and
a limit argument to derive \eqref{2.2} in this case as well.
\end{proof}

\section{Proof of Theorem~\ref{neu}}

Theorem~\ref{neu} directly
follows from the combination of Theorems~\ref{zwischen}
and~\ref{emma} below.

For integers $\ell, n$ with $n\geq 2$ and $1\leq \ell\leq n+1$, we write
$w_{n,\ell}(\xi)$ for the supremum of $w$ for which there are arbitrarily large integers $H$ such that the system
\[
0<|a_{0} + a_{1} \xi + \cdots + a_{n} \xi^{n} |_{p} \leq H^{-w-1},
\qquad \max_{0 \le i \le n} |a_{i}| \le H,
\]
has $\ell$ linearly independent solutions in integer $(n+1)$-tuples $(a_0, a_1, \ldots , a_n)$.
Note that $w_{n,1}(\xi)=w_{n}(\xi)$.
The following theorem is the $p$-adic
analogue of a result obtained in the course of the proof of \cite[Theorem~2.1]{Schl17}.
Its proof is very similar.

\begin{theorem} \label{zwischen}
    Let $m,n$ be positive integers and $\xi$ in $\mathbb{Q}_{p}$. We have
    \[
    w_{m+n-1,m+n}(\xi) \geq \min\{ w_{m}(\xi),\widehat{w}_{n}(\xi) \}.
    \]
    If $m=n=\widehat{w}_{n}(\xi)$, then the following slightly stronger claim holds: For arbitrarily large $H$,
    there exist
    $m+n=2n$
    linearly independent solutions to
    $$
    |P(\xi)|_{p} \leq c(\xi, n) \, H^{- n - 1},
    $$
    in integer polynomials $P$ of degree at most $m+n-1=2n-1$ and height at most $H$,
    where $c(\xi, n)$ is a suitable positive number depending only on $n$ and $\xi$.
\end{theorem}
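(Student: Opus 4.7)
The plan is to prove $w_{m+n-1,m+n}(\xi)\geq w$, where $w:=\min\{w_m(\xi),\widehat{w}_n(\xi)\}$, by exhibiting, for arbitrary $\varepsilon>0$ and for arbitrarily large heights $H$, an explicit family of $m+n$ linearly independent integer polynomials of degree at most $m+n-1$ and height at most $H$, each satisfying the bound $|\cdot(\xi)|_p\leq H^{-w-1+\varepsilon}$. The normalization at the start of Section~3 lets us assume $|\xi|_p=1$. Letting $\varepsilon\to 0$ then delivers the claimed inequality.

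The construction blends the polynomials produced by the two exponents. The definition of $w_m(\xi)$ supplies, along an infinite sequence of heights $H$, an integer polynomial $P$ of degree at most $m$ and height at most $H$ with $|P(\xi)|_p\leq H^{-w-1+\varepsilon}$. For each such $H$ large enough, the \emph{uniform} definition of $\widehat{w}_n(\xi)$ supplies a companion integer polynomial $Q$ of degree at most $n$ and height at most $H$ with $|Q(\xi)|_p\leq H^{-w-1+\varepsilon}$. I would then form the family
\[
\mathcal{F}=\{X^j P: 0\leq j\leq n-1\}\cup\{X^i Q: 0\leq i\leq m-1\},
\]
of cardinality $m+n$. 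Each element is an integer polynomial of degree at most $m+n-1$, its height equals that of the underlying $P$ or $Q$ (monomial shifts merely translate the coefficient vector), and since $|\xi|_p=1$ the monomial factor $X^k$ does not change the $p$-adic absolute value at $\xi$, so every element of $\mathcal{F}$ meets the target bound.

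The heart of the argument is $\mathbb{Q}$-linear independence of $\mathcal{F}$. In the generic scenario $\deg P=m$, $\deg Q=n$, and $\gcd(P,Q)=1$ in $\mathbb{Q}[X]$, the coefficient matrix of $\mathcal{F}$ in the monomial basis $\{1,X,\ldots,X^{m+n-1}\}$ is precisely the Sylvester matrix of $P$ and $Q$, whose determinant is the nonzero resultant $\mathrm{Res}(P,Q)$; thus $\mathcal{F}$ is a basis and one reads off $w_{m+n-1,m+n}(\xi)\geq w-\varepsilon$. The main obstacle is the degenerate case, where $P$ and $Q$ share a common factor $D$ of positive degree (or one of them fails to attain its nominal degree): then the rank of $\mathcal{F}$ drops to $m+n-\deg D$, and one must supply $\deg D$ extra linearly independent polynomials meeting the same constraints. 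The plan to resolve this is to exploit the \emph{uniformity} of $\widehat{w}_n$, which furnishes polynomials $Q_{H_0}$ at every sufficiently large height $H_0$: by sampling $H_0$ across an interval $[H^{1-\delta},H]$ for small $\delta>0$, one obtains a rich supply of degree-$\leq n$ approximants that cannot all lie in the ideal $(D)\cdot\mathbb{Z}[X]$ without forcing, via Mahler--Mignotte height bounds on divisors, an algebraic relation on the transcendental $\xi$. Augmenting $\mathcal{F}$ by a few such polynomials restores full rank.

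The sharper claim under the symmetric hypothesis $m=n=\widehat{w}_n(\xi)$ removes the $\varepsilon$-slack from the exponent and replaces it by an explicit constant $c(\xi,n)$. The plan here is to avoid the qualitative argument above and instead run a quantitative $p$-adic second-minimum argument on the convex body in $\mathbb{R}^{m+n}$ cut out by the height bound and the $p$-adic approximation constraint: in this balanced case the volume estimate is sharp enough that a single application of the $p$-adic analogue of Minkowski's second theorem produces all $2n$ successive-minima vectors simultaneously, with the multiplicative constant $c(\xi,n)$ absorbing the lower-order factors. The principal technical difficulty is the coprimality-reduction bookkeeping in the general statement; the symmetric refinement, by contrast, is mainly a volume calculation.
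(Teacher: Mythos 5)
Your skeleton --- pick $P$ from $w_m$, $Q$ from $\widehat{w}_n$ at comparable heights, shift by monomials, read off linear independence from a Sylvester determinant --- is the same as the paper's. But your treatment of the ``degenerate case'' is a genuine gap, and that case is exactly where the real content of the proof lies. Taking $P$ straight from the definition of $w_m$ gives no control over its irreducibility or its degree, and taking $Q$ at height $\le H$ gives no reason for $\gcd(P,Q)=1$. Your proposed repair (sample $Q$ at heights in $[H^{1-\delta},H]$, invoke Mahler--Mignotte bounds, and conclude that the $Q$'s cannot all lie in the ideal $(D)$ without forcing $\xi$ to satisfy an algebraic relation) does not go through: even if some $Q'$ avoids the factor $D$, nothing prevents $\gcd(P,Q')$ from being a \emph{different} nontrivial divisor of $P$; and the $Q$'s are only approximants, so their divisibility by a fixed $D$ places no algebraic constraint on $\xi$ whatsoever. ``Augmenting $\mathcal{F}$ by a few such polynomials'' therefore does not visibly restore full rank.

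The paper removes the degenerate case rather than repairing it. It passes to the smallest $\ell\le m$ with $w_\ell(\xi)=w_m(\xi)$. Using Gelfond's Lemma $H(P_1P_2)\gg H(P_1)H(P_2)$ together with multiplicativity of $|\cdot|_p$, any reducible near-extremal polynomial of degree $\le\ell$ would produce a lower-degree factor realizing the same exponent, contradicting the minimality of $\ell$; hence $P$ may be taken \emph{irreducible} of degree exactly $\ell$. The companion $Q_0$ is then taken from $\widehat{w}_n$ at the slightly smaller height $\widetilde H=H/(2K(n))$ (and $Q=X^{n-d}Q_0$ has degree exactly $n$); Gelfond's Lemma again shows $P\nmid Q_0$, and since $P$ is irreducible this forces $\gcd(P,Q)=1$ with no further work. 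Multiplying by $1,X,\dots,X^{m-\ell}$ lifts the resulting $\ell+n$ independent solutions back up to $m+n$ of degree $\le m+n-1$. Finally, the refinement in the balanced case $m=n=\widehat{w}_n(\xi)$ is not, in the paper, a Minkowski second-theorem argument as you suggest; it is the same construction with the $\varepsilon$-lossy $P$ replaced by a Dirichlet pigeonhole polynomial with $|P(\xi)|_p\ll_{n,\xi}H^{-n-1}$ available at every large $H$, so the $\varepsilon$ can be traded for a multiplicative constant. (Your successive-minima route would in any case need a lower bound on the first minimum before the second theorem could control the last one, which you do not have.)
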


The second auxiliary result is a transference theorem.

\begin{theorem} \label{emma}
    Let $n\geq 1$ be an integer and $\xi$ in $\mathbb{Q}_{p}$. We have
    \begin{equation}  \label{eq:geleich}
    \widehat{\lambda}_{n}(\xi) \leq \frac{1}{w_{n,n+1}(\xi)}.
    \end{equation}
    More precisely, if for some positive real number $c_1$ and 
    for arbitrarily large integers $H$, the system   
    \begin{equation} \label{eq:testy}
    H(P)\leq H, \quad \vert P(\xi)\vert_{p} \leq c_1 H^{-w-1}
    \end{equation}
    has $n+1$ linearly independent solutions in integer polynomials
    $P$ of degree at most $n$, then, there are a positive constant $c_2$ 
    and arbitrarily large integers $Z$ such that the system  
    \begin{equation}  \label{eq:teu}
    1\leq \max_{0\leq j\leq n}  |y_{j}|  \leq Z, \quad
    \max_{1\leq j\leq n} |y_{0}\xi^{j}-y_{j}|_{p} \leq c_2 Z^{-\frac{1}{w}-1 }
    \end{equation}
    has no solution in integers $y_0, y_1, \ldots , y_n$.
\end{theorem}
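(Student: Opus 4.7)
The plan is to deduce the inequality \eqref{eq:geleich} from the more precise second claim, on which I focus. Writing each of the $n+1$ given polynomials as $P_i(X) = \sum_{j=0}^n a_{i,j} X^j$ for $i = 1, \ldots, n+1$, I consider the integer-valued linear forms
\[
L_i(y_0,\ldots,y_n) := \sum_{j=0}^n a_{i,j}\, y_j.
\]
The key identity, valid in $\QQ_p$ and obtained by writing each $y_j$ as $y_0\xi^j - (y_0\xi^j - y_j)$ (the $j=0$ contribution vanishing), reads
\[
L_i(y_0,\ldots,y_n) = y_0\, P_i(\xi) - \sum_{j=1}^{n} a_{i,j}\,(y_0\xi^j - y_j).
\]
Since $|y_0|_p \leq 1$ and $|a_{i,j}|_p \leq 1$, the ultrametric inequality yields $|L_i(y_0,\ldots,y_n)|_p \leq \max\{|P_i(\xi)|_p,\, \max_{1 \leq j \leq n}|y_0\xi^j - y_j|_p\}$, while the Archimedean bound $|L_i(y_0,\ldots,y_n)| \leq (n+1)HZ$ is immediate whenever $\max|y_j| \leq Z$.

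The next step is to fix constants $A, c_2 > 0$ depending only on $c_1$, $n$, and $w$ (concretely one may take $A = 1/(2(n+1)c_1)$ and then $c_2$ so small that $(n+1)c_2\, (A/2)^{-1/w} \leq 1/2$), and, for each admissible $H$, set $Z := \lfloor A H^w \rfloor$, which tends to infinity with $H$. Suppose, aiming at a contradiction, that some non-zero integer tuple $(y_0,\ldots,y_n)$ satisfies \eqref{eq:teu} with this $Z$ and this $c_2$. Inserting \eqref{eq:teu} together with the hypothesis $|P_i(\xi)|_p \leq c_1 H^{-w-1}$ into the two bounds of the previous paragraph yields, for every $i \in \{1,\ldots,n+1\}$,
\[
|L_i(y_0,\ldots,y_n)|\cdot |L_i(y_0,\ldots,y_n)|_p \leq (n+1)HZ\cdot \max\{c_1 H^{-w-1},\, c_2 Z^{-1/w-1}\} < 1,
\]
by the design of $A$ and $c_2$. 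Since $|m|\cdot|m|_p \geq 1$ for every non-zero integer $m$, each $L_i(y_0,\ldots,y_n)$ must vanish. But the $n+1$ coefficient vectors $(a_{i,0},\ldots,a_{i,n})$ are $\QQ$-linearly independent by hypothesis, which forces $(y_0,\ldots,y_n) = 0$ and contradicts $\max|y_j| \geq 1$.

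The inequality \eqref{eq:geleich} then follows at once: for any $w < w_{n,n+1}(\xi)$, the definition of $w_{n,n+1}$ supplies some $c_1 > 0$ and arbitrarily large $H$ meeting the hypothesis of the precise claim, so the latter produces arbitrarily large $Z$ for which \eqref{eq:teu} has no solution, forcing $\widehat\lambda_n(\xi) \leq 1/w$; letting $w \to w_{n,n+1}(\xi)$ delivers \eqref{eq:geleich}. I do not anticipate a major obstacle. The single delicate point is the simultaneous balancing of the Archimedean factor $HZ$ against the two $p$-adic factors $H^{-w-1}$ and $Z^{-1/w-1}$, which pins down both the scaling $Z \asymp H^w$ and the exponent $1/w$ appearing in \eqref{eq:teu}.
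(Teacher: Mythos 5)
Your proof is correct, and it takes a genuinely different and more elementary route than the paper. The paper follows Teuli\'e's approach via the geometry of numbers: it sets up a lattice $\Lambda(h)$ of ``good'' simultaneous approximations, forms its dual lattice $\Lambda^{\ast}(h)$ (whose points correspond to polynomials small at $\xi$), and invokes Mahler's duality for successive minima (Cassels' Theorem VI) to pass from a bound on $\tau_1(K,\Lambda(h))$ to a lower bound on $\tau_{n+1}(K^{\ast},\Lambda^{\ast}(h))$. This yields \eqref{eq:geleich} directly, and the paper then remarks that a contrapositive rerun of the argument gives the stronger claim \eqref{eq:testy}$\Rightarrow$\eqref{eq:teu}, omitting details. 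You instead prove the strong claim head-on with a bare-hands transference: you pair each of the $n+1$ independent polynomials $P_i$ with the linear form $L_i(y)=\sum_j a_{i,j}y_j$, exploit the identity $L_i(y)=y_0P_i(\xi)-\sum_{j\geq 1}a_{i,j}(y_0\xi^j-y_j)$ together with the ultrametric inequality to bound $|L_i(y)|_p$, bound $|L_i(y)|$ trivially, and force $L_i(y)=0$ for all $i$ via the product inequality $|m|\,|m|_p\geq 1$; linear independence of the coefficient vectors then kills $y$. This avoids successive minima and duality entirely, makes the constants completely explicit (the choice $Z\asymp H^w$ and $c_2\ll (A/2)^{1/w}$ works out exactly as you claim), and naturally delivers the precise quantitative form of the conclusion rather than recovering it a posteriori. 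Both arguments are sound; yours is shorter, self-contained, and arguably better suited to the statement as phrased.
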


As in the real case, there is in fact equality in \eqref{eq:geleich}; the reverse
inequality has been proved in the course of the proof of \cite[Th\'eor\`eme~3]{Teu02}.

Recall that Gelfond's Lemma (see e.g. \cite[Lemma A.3]{BuLiv})
asserts that there exists a real number $K(n)>1$, depending only on the
positive integer $n$, such that all integer polynomials $P,Q$ of degree at most $n$ satisfy
\begin{equation} \label{eq:hoehe}
\frac{1}{K(n)} H(P)H(Q) \leq H(PQ) \leq K(n) H(P)H(Q).
\end{equation}

\begin{proof}[Proof of Theorem~\ref{zwischen}]
    Let $\ell$ be the smallest integer in $\{1,2,\ldots,m\}$ such that $w_{\ell}(\xi)=w_{m}(\xi)$.
    We will show that
    \begin{equation} \label{eq:nonedda}
    w_{\ell+n-1, \ell+n}(\xi)\geq \min\{ w_{\ell}(\xi),\widehat{w}_{n}(\xi)\}=\min\{ w_{m}(\xi),\widehat{w}_{n}(\xi)\}.
    \end{equation}
    Then, the claim follows since, if $\ell<m$, it is sufficient to
    consider products of the involved polynomials by the powers 
    $1,X,\ldots,X^{m-l}$ of their variable $X$.

    Assume first that $w_{\ell}(\xi)$ is finite.
    Let $\varepsilon$ be in $(0,1)$. By Gelfond's Lemma and the
    definition of $\ell$, for arbitrarily large $H$, there exist
    {\em irreducible} polynomials $P (X)$ of degree exactly $\ell$ that satisfy
    \begin{equation} \label{eq:numerouno}
    H(P)=H, \quad \vert P(\xi)\vert_{p} \leq H^{-w_{\ell}(\xi)-1+\varepsilon}.
    \end{equation}
    With $K(n)$ as in \eqref{eq:hoehe}, set
    \[
    {\widetilde H}=\frac{H}{2K(n)}.
    \]
    Setting $w = \min\{\widehat{w}_{n} (\xi), w_{\ell}(\xi)\}$, by definition of $\widehat{w}_{n} (\xi)$,
    there exists a polynomial $Q_0$ in $\mathbb{Z}[X]$ of degree at most $n$ such that
    \[
    H(Q_0)\leq {\widetilde H}, \quad
    0 < \vert Q_0 (\xi)\vert_{p} \leq {\widetilde H}^{- w -1+\varepsilon}.
    \]
    Set $Q(X) = X^{n-d} Q_0 (X)$, where $d$ denotes the degree of $Q_0$.  
    Since $w$ is finite and
    $\varepsilon<1$, we have
    \begin{equation} \label{eq:numerodos}
    H(Q)\leq {\widetilde H}, \quad
    \vert Q(\xi)\vert_{p}
    \leq K_{1}\cdot H^{- w -1+\varepsilon},
    \end{equation}
    for some positive $K_{1}$.

    By \eqref{eq:hoehe} the polynomial $Q_0$ (and, thus, $Q$) cannot be a multiple of $P$. 
    Hence, since $P$ is irreducible, the polynomials $P$ and $Q$ have no common factor over $\mathbb{Z}[X]$.
    Moreover, the pair
    $(P,Q)$ satisfies the properties
    \[
    \max \{ H(P),H(Q)\} =H, \quad
    \max \{ \vert P(\xi)\vert_p, \vert Q(\xi)\vert_p \}\leq K_{1}\cdot H^{- w -1+\varepsilon}.
    \]
    Consider the set $\mathscr{P}$ of $\ell+n$ polynomials
    \begin{equation} \label{eq:p}
    \mathscr{P}=\{ P, XP, X^{2}P,\ldots, X^{n-1}P, Q, XQ,\ldots,X^{\ell-1}Q\}.
    \end{equation}
    The determinant of the $(\ell+n)\times (\ell+n)$ matrix whose rows are given by the coefficients of the polynomials
    in $\mathscr{P}$ is the resultant of $P$ and $Q$, which is non-zero since
    $P$ and $Q$ have no common factor. Hence,
    the elements of $\mathscr{P}$ are linearly independent and span
    the space of polynomials of degree at most $\ell+n-1$.
    Furthermore, it follows from \eqref{eq:numerouno} and \eqref{eq:numerodos} that any $R$
    in $\mathscr{P}$ satisfies
    \begin{equation} \label{eq:oppen}
    H(R)\leq H, \quad \vert R(\xi)\vert_p  \ll_{n,\xi, p}  H^{-\min\{\widehat{w}_{n}(\xi), w_\ell(\xi)\} -1+\varepsilon}
    \end{equation}
    This shows \eqref{eq:nonedda}. If $w_{\ell}(\xi)$ is infinite, then the same type of argument gives \eqref{eq:nonedda},
    we omit the details.

    Finally, if $m=n=\widehat{w}_{n}(\xi)$, in place
    of \eqref{eq:numerouno}
    we may use a variant of Dirichlet's Theorem stating 
    that for every $\xi\in \mathbb{Q}_{p}$ and every large $H$ the system 
    $$
    H(P)\le H, \quad |P(\xi)|_{p}\ll_{n,\xi} H^{-n-1}  
    $$
    has a solution
    in non-zero integer polynomials $P$ of degree at most $n$.
    Consequently, again by Gelfond's Lemma, the real number $\varepsilon$ in \eqref{eq:numerouno}
    can be taken equal to $0$ upon introduction of a multiplicative
    constant. Thus from the method above
    we get the last assertion of the theorem.
\end{proof}

\begin{definition}
    For a convex body $K\subseteq \mathbb{R}^{n+1}$ and a lattice
    $\Lambda\subseteq \mathbb{R}^{n+1}$, for any $j = 1, \ldots , n+1$, the $j$-th of the successive minima
    of $K$ with respect to $\Lambda$, denoted
    by $\tau_{j}(K,\Lambda)$, is the smallest $\lambda$ such that the convex body
    $\lambda K=\{\lambda k: k\in K\}$ contains $j$ linearly independent points of $\Lambda$.
\end{definition}

\begin{proof}[Proof of Theorem~\ref{emma}]
    We follow the initial steps of the proof of
    \cite[Th\'eor\`eme~3]{Teu02}, where we
    apply the shift $n\to n+1$ in the notation.

    Let $\lambda$ be a real number with
    $$
    1>\lambda > \frac{1}{1+\widehat{\lambda}_{n}(\xi)}.
    $$
    By definition of $\widehat{\lambda}_{n}(\xi)$, the system
    \begin{align}
    \max_{0\leq i\leq n} |z_{i}| &\leq p^{(n+1){\lambda} h}, \nonumber \\
    \max_{1\leq j\leq n} |z_{0}\xi^{j}-z_{j}|_{p} &\leq p^{-(n+1)h} \label{eq:t2}
    \end{align}
    has a solution in non-zero integer vectors $(z_{0},\ldots,z_{n})$
    for every large enough $h$.
    Equivalently, for every sufficiently large
    positive integer $h$ the system
    \begin{align}
    \max_{0\leq i\leq n} |y_{i}| &\leq p^{-(n+1)(1-{\lambda})h}, \nonumber \\
    \max_{1\leq j\leq n} |y_{0}\xi^{j}-y_{j}|_{p} &\leq 1 \label{eq:t2bis}
    \end{align}
    has a non-zero solution $(y_{0},\ldots,y_{n})$
    in the $\ZZ$-module $p^{-(n+1)h}\mathbb{Z}^{n+1}$.
    We may assume that $|\xi|_{p}=1$.
    The solutions to \eqref{eq:t2bis}
    in $p^{-(n+1)h}\mathbb{Z}^{n+1}$
    form a lattice generated by
    \begin{align*}
    &( p^{-h(n+1)} , p^{-h(n+1)}b_{1} , \ldots, p^{-h(n+1)} b_{n} )  \\
    &(0,1,0,0\ldots,0),  \\
    &(0,0,1,0,0\ldots,0),  \\
    &\vdots \;\; \vdots \;\; \vdots \;\; \vdots \;\; \vdots \;\; \vdots \;\; \vdots \;\; \vdots \;\; \vdots \\
    &(0,0,0,0,\ldots,1),
    \end{align*}
    for a given set of integers $b_{1},\ldots,b_{n}$. Denote this lattice
    by $\Lambda(h)$. Its dual lattice $\Lambda^{\ast}(h)$, 
    defined by
    \[
    \Lambda^{\ast}(h)= \{ y\in\mathbb{R}^{n+1} : \forall x\in \Lambda,\;
    x\cdot y \in\mathbb{Z} \},
    \]
    where $\cdot$ is the usual scalar product in $\mathbb{R}^{n+1}$, is generated by the vectors
    \begin{align*}
    &( p^{h(n+1)} , 0 , 0, \ldots, 0 ),  \\
    &(-b_{1},1,0,0\ldots,0),  \\
   &\vdots \;\; \vdots \;\; \vdots \;\; \vdots \;\; \vdots \;\; \vdots \;\; \vdots \;\; \vdots \;\; \vdots \\
    &(-b_{n},0,0,0,\ldots,1).  \\
    \end{align*}
    In other words, $\Lambda^{\ast}(h)$ is the sublattice of $\Z^{n+1}$ consisting of the vectors
    $(x_{0},\ldots,x_{n})$ in $\mathbb{Z}^{n+1}$ that satisfy
    \[
    |x_{0}+x_{1}\xi+\cdots+x_{n}\xi^{n}|_{p} \leq p^{-h(n+1)}.
    \]
    Let $K$ be the convex body formed by the vectors $(x_{0},\ldots,x_{n})$ in $\mathbb{R}^{n+1}$ that satisfy
    $\max_{0\leq i\leq n} |x_{i}| \leq 1$, and $K^{\ast}$ its dual convex body
    defined by
    $$
    K^{\ast} := \{ y\in\mathbb{R}^{n+1}: \forall x\in K,\;
    |x\cdot y| \leq 1 \}.
    $$
    Note that $K$ is almost equal to $K^{\ast}$, more precisely
    $c^{-1}K^{\ast}\subseteq K \subseteq cK^{\ast}$ for some $c=c(n)$. 
    Consequently, for $j = 1, \ldots , n+1$, the $j$-th  of the successive minima 
    $\tau_{j}(K^{\ast},\Lambda^{\ast}(h))$ and $\tau_{j}(K,\Lambda^{\ast}(h))$ satisfy
    $$
    \tau_{j}(K^{\ast},\Lambda^{\ast}(h))\asymp \tau_{j}(K,\Lambda^{\ast}(h)).
    $$
    Consider the first of the successive minima $\tau_{1}(K,\Lambda(h))$ of $K$
    with respect to $\Lambda(h)$ and the 
    $(n+1)$-th successive minima $\tau_{n+1}^{\ast}(K^{\ast},\Lambda^{\ast}(h))$
    of $K^{\ast}$ with respect to $\Lambda^{\ast}(h)$. Then, by
    \cite[Theorem VI, p. 219]{Cas59}, we have
    \[
    \tau_{1}(K,\Lambda (h) )\cdot \tau_{n+1}(K^{\ast},\Lambda^{\ast}(h)) \gg 1.
    \]
    We remark that Teuli\'e \cite[(2.5)]{Teu02} used the reverse estimate (which is true as well).
    Since the system \eqref{eq:t2bis} has a non-zero solution in the $\ZZ$-module $p^{-(n+1)h}\mathbb{Z}^{n+1}$
    for every sufficiently large integer $h$, there exists an integer $h_0$ such that
    \[
    \tau_{1}(K,\Lambda(h))\leq p^{-(n+1)(1-{\lambda})h}, \quad h\geq h_{0}.
    \]
    Hence,
     recalling that $c^{-1}K\subseteq K \subseteq cK^{\ast}$,
     we get
    \[
     \tau_{n+1}(K,\Lambda^{\ast}(h)) \gg \tau_{n+1}(K^{\ast},\Lambda^{\ast}(h))
     \gg p^{(n+1)(1-{\lambda})h}, \quad h\geq h_{0}.
    \]
    Consequently, there exists $c > 0$ such that, for $h > h_0$, the system
    \[
    0 < \max\{|x_0|, \ldots , |x_n|\} \le c p^{(n+1)(1-{\lambda})h}, \quad
|x_{0}+x_{1}\xi+\cdots+x_{n}\xi^{n}|_{p} \leq p^{-h(n+1)},
    \]
does not have $n+1$ linearly independent integer vector solutions $(x_{0},\ldots,x_{n})$.   We have shown that
    \[
    w_{n,n+1}(\xi) \leq \frac{1}{1-{\lambda}  }-1= \frac{{\lambda} }{1-{\lambda} }.
    \]
    Since ${\lambda} $ can be chosen arbitrarily close to $(1+\widehat{\lambda}_{n}(\xi))^{-1}$,
    we see that the right
    hand side can be arbitrarily close to $\widehat{\lambda}_{n}(\xi)^{-1}$. This completes the proof of \eqref{eq:geleich}. Analyzing the proof,
    a very similar argument upon taking the contrapositive of the claim 
    gives the implication of \eqref{eq:teu}  from \eqref{eq:testy}. We omit
    the details.
\end{proof}

\bigskip
\noindent
{\bf Acknowledgement}.
The authors are very grateful to the referee for an extremely careful reading and many 
corrections.

\end{document}